\newtheorem{theorem}{Theorem}[section]
\newtheorem{cor}[theorem]{Corollary}
\newtheorem{lemma}[theorem]{Lemma}
\newtheorem{prop}[theorem]{Proposition}
\theoremstyle{definition}
\newtheorem{define}[theorem]{Definition}
\newtheorem{ex}[theorem]{Example}
\newtheorem{remark}[theorem]{Remark}
\newcommand{\field}[1]{\mathbb{#1}}
\newcommand{\zkz}{\field{Z}/k\field{Z}}
\def\revdots{\mathinner{\mkern1mu\raise1pt\vbox{\k ern7pt\hbox{.}}\mkern2mu\raise4pt\hbox{.}\mkern2mu \raise7pt\hbox{.}\mkern1mu}}
\begin{document}
\title[$K$-homology with coefficients]{Geometric $K$-homology with coefficients I: \\ $\zkz$-cycles and Bockstein sequence}
\author{Robin J. Deeley}
\thanks{}
\subjclass[2010]{Primary: 19K33; Secondary:  19K56, 55N20 }
\keywords{$K$-homology, geometric cycles, $\zkz$-manifolds, index theory}
\date{}
\begin{abstract}
We construct a Baum-Douglas type model for $K$-homology with coefficients in $\zkz$.  The basic geometric object in a cycle is a ${\rm spin^c}$ $\zkz$-manifold.  The relationship between these cycles and the topological side of the Freed-Melrose index theorem is discussed in detail.  Finally, using inductive limits, we construct geometric models for $K$-homology with coefficients in any countable abelian group.  
\end{abstract}
\maketitle

The development of $K$-homology, the dual of $K$-theory, has involved both geometric and analytic ideas.  To briefly review the history, it was Atiyah who first proposed a model for $K$-homology using Fredholm operators in \cite{A}.  This was realized (independently) by the works of Kasparov \cite{Kas} and BDF-theory \cite{BDFear} and \cite{BDF}.  In these cases, the cycles are analytic in nature and are based on ideas from the theory of operator algebras.  Later, Baum and Douglas \cite{BD} defined a geometric model for $K$-homology.
\par
Recall that a cycle in the Baum-Douglas model is a triple, $(M,E,f)$, where $M$ is a compact ${\rm spin^c}$-manifold, $E$ is a smooth Hermitian vector bundle over $M$, and $f$ is a continuous map from $M$ to the space whose $K$-homology we are modeling.  For any homology theory, it is useful to study the corresponding theory with coefficients.  The easiest way to define this, in the context of geometric $K$-homology, is to alter the map $f$.  Thus, we define a cycle as before except for the map, which now maps from $M$ to $X\times Y$, where $X$ is the space whose $K$-homology is to be modeled while $Y$ is a space with $K_0(Y)=G$ and $K_1(Y)=0$.  Another approach is to alter the vector bundle $E$.  Since vector bundles determine classes in $K$-theory, the idea is to replace $E$ with a class in $K^*(M;G)$ (here, $K^*(M;G)$ denotes $K$-theory with coefficients in $G$).  This method has been developed by Jakob in \cite{Jak} and by Emerson and Meyer in \cite{EM} under the condition that $K^*(\cdot;G)$ is a multiplicative cohomology theory.  Both these methods are very general; to gain a concrete description of the cycles which define $K$-homology with coefficients, we required either a clear understanding of the space $Y$ or of the group $K^*(M;G)$.  
\par
There is another approach.  Namely, one could look to alter the ${\rm spin^c}$ manifold in the Baum-Douglas cycles.  To do so, one would need a different geometric object that is related to the specific coefficient group.  Because of this requirement, the model would not be as easy to define in general.  However, the model would be more intrinsic (i.e., would not rely on an understanding of $K$-theory with coefficients or of the space $Y$ above).  Moreover, the resulting model conceptualizes index theory for the geometric objects which are used to determine cycles.  For example, in the case of $\zkz$-coefficients and our geometric cycles, the relevant index theorem is the Freed-Melrose index theorem (see \cite{FM}).  
\par
The main goal of this paper is the construction of a geometric model for the coefficient group $\zkz$.  As such, the main result is the construction of the Bockstein sequence (also called the universal coefficient sequence) for this group (see Theorem \ref{Bockstein}).  The geometric object we use are ${\rm spin^c}$ $\zkz$-manifolds.  These singular spaces were introduced by Sullivan (e.g., \cite{MS}, \cite{Sul}) to study geometric topology.  Later, in \cite{Fre}, Freed began the study of index theory for such objects.  In \cite{FM}, Freed and Melrose proved an index theorem for $\zkz$-manifolds, which takes values in $\zkz$.  This result, along with Sullivan's work relating $\zkz$-manifolds to bordism groups with coefficients in $\zkz$, are the main reasons $\zkz$-manifolds are the correct object to determine cycles in our model.  \par
In fact, we define two geometric models for K-homology with coefficients in $\zkz$.  The first is the natural ``$\zkz$-version" of the original Baum-Douglas model while the second is the natural ``$\zkz$-version" of the geometric model for K-homology defined in \cite{Rav}.  In the latter model, the vector bundle in the original Baum-Douglas model is replaced by a K-theory class.  This change to the cycles allows for a concise realization of trivial cycles; this is an invaluable tool used in the proof of the Bockstein exact sequence (see Theorem \ref{Bockstein}).  These models are discussed in more detail at the start of Section 2.2.
\par
To give some context to our construction, we review the history of the original motivation for the Baum-Douglas model.  The starting point for this theory is the relationship between bordism and K-theory.  In particular, for a finite CW-complex, $X$, Conner-Floyd (see \cite{CF}) constructed a natural isomorphism
$$MU_{\rm even}(X)\otimes_{MU_{\rm even}(pt)} \field{Z} \rightarrow K_0(X)$$
where $MU$ denotes the bordism group of stably almost complex manifolds and $K_*(X)$ denotes the K-homology of $X$.  Moreover, Atiyah (see \cite{A}) showed that there is a natural surjection
$$MU_{\rm even/odd}(X) \rightarrow K_*(X)$$
Thus, one would naturally ask if there exists a more refined equivalence relation which turns this map into an isomorphism.  The Baum-Douglas model for K-homology answers this question in the affirmative and, moreover, with a relation defined in a very natural way.   
\par
In the context of $\zkz$-coefficients, we have a similarly defined surjective map
$$MU_{\rm even/odd}(X;\zkz) \rightarrow K_*(X;\zkz)$$
where the domain of this map is Baas-Sullivan bordism theory (in the particular case of $k$-points) and the image is K-homology with coefficients in $\zkz$.  Again, one is led to ask if there is a refined equivalence relation which turns this map into an isomorphism.  This question is answered in the affirmative in this paper.  Moreover, the relation defined should be as ``similar as possible" to the relation defined by Baum and Douglas.  \par
This last (somewhat informal) statement could be interpreted as the requirement that the diagram \\
\begin{center}
$\begin{CD}
MU_{\rm even/odd}(X) @>>> K_*(X) \\
@VVV  @VVV \\
MU_{\rm even/odd}(X;\zkz) @>>> K_*(X;\zkz) \\ 
\end{CD}$
\end{center} \vspace{0.5cm}
is respected by the refined relation in the $\zkz$-theory.  The relationship between our construction and bordism is discussed in more detail on page 17.
\par
The content of the paper is as follows.  In Section 1, we review the basic properties of $\zkz$-manifolds.  This includes the generalization of a number of notions from manifold theory to $\zkz$-manifolds such as the Freed-Melrose index theorem mentioned above.  Much of this material is not new, but is introduced since geometric properties of $\zkz$-manifolds are of fundamental importance to our model.  In Section 2, we introduce the cycles which determine our model.  These cycles are triples of the form, $((Q,P),(E,F),f)$, where $(Q,P)$ is a compact ${\rm spin^c}$ $\zkz$-manifold, $(E,F)$ is a $\zkz$-vector bundle over $(Q,P)$, and $f$ is a continuous map from $(Q,P)$ to the space whose $K$-homology (with coefficients in $\zkz$) we are modelling.  The main result of this section is a proof (under the condition that $X$ is a finite CW-complex) that $K_*(X;\zkz)$ fits into the Bockstein exact sequence.  The case of $K_0(pt;\zkz)$ is discussed in detail; in particular, we discuss its relationship with the topological side of the Freed-Melrose index theorem.   Finally, in Section 3, we produce models for any countable abelian group using inductive limits.       
\par
A word or two on the exposition may be helpful to the reader.  We have tried to limit prerequisites to a good understanding of the Baum-Douglas model for $K$-homology (i.e., an understanding of the papers \cite{BD}, \cite{BDT}).  A nice modern source for this material is \cite{BHS}.  We have followed this reference and \cite{Rav} for matters related to the Baum-Douglas model, and have followed \cite{Hig} and \cite{MS} for the theory of $\zkz$-manifolds.  Section 1 covers the basics of $\zkz$-manifold theory which we require for our development.  The reader is directed to \cite{Hig} and \cite{MS} for more details on the generalizations of a number of notions from manifold theory to $\zkz$-manifold theory.  Moreover, the reader who is unfamiliar with the theory of $\zkz$-manifolds is encouraged to read Section 1 of \cite{MS} for a short, but illuminating introduction to the subject.  In fact, the theory we develop here is best described as a formulation of the ideas presented in \cite{MS} and \cite{Sul1} (in particular, Chapter 6 of \cite{Sul1}) into the context of cycles of the form developed by Baum and Douglas in \cite{BD}. 
\par
A word of caution to the reader unfamilar with $\zkz$-manifolds is in order.  There is no action of the group $\zkz$ on these objects.  The reference to the group $\zkz$ can be explained by the fact that (even dimensional, ${\rm spin^c}$) $\zkz$-manifolds naturally imbed into a space, $W$, with $K^0(W)\cong \zkz$ (see $\tilde{H}^{2n}_k$ in Definition \ref{HigSpa}).  As we discuss in detail, this imbedding leads to a $\zkz$-valued index.  This is completely analogous to the case of (even dimensional, ${\rm spin^c}$) manifolds and the topological side of the Atiyah-Singer index theorem.  Even in Section 3 when we discuss inductive limit constructions, the relationship with the group is through the operation of disjoint union and not through any ``$\zkz$ group action" on the boundary components.
\par
We have used the following notation.  Throughout, $X$ will denote a finite CW-complex.  The K-theory (with compact supports) of $X$ is denoted by $K^*(X)$, while its K-homology is denoted by $K_*(X)$.  If $M$ is a manifold, then we denote the disjoint union of $k$-copies of $M$ by $kM$.  If $E_1$ and $E_2$ are vector bundles over $M_1$ and $M_2$, then we use $E_1 \dot{\cup} E_2$ to denote the vector bundle (over $M_1\dot{\cup}M_2$) with fiber at $x$ given by $(E_1)_x$ if $x\in M_1$ and $(E_2)_x$ if $x\in M_2$.  We also use $k E$ as notation for $\dot{\cup}_{k\; times} E$.  We use similar notation for mappings.

\section{Preliminaries}
\subsection{$\zkz$-manifolds}
\par
In this section, we introduce $\zkz$-manifolds, which are the basic geometric objects used in our model.  For this model, we require $\zkz$-manifolds with a ${\rm spin^c}$-structure.  
\subsection{Definition and basic properties of $\zkz$-manifolds}
\begin{define} \label{zkzmfld}
Let $Q$ be an oriented, smooth compact manifold with boundary.  We assume that the boundary of $Q$, $\partial Q$, decomposes into $k$ disjoint manifolds, $(\partial Q)_1, \ldots, (\partial Q)_k$.  A $\zkz$-structure on $Q$ is an oriented manifold, $P$, and orientation preserving diffeomorphisms, $\gamma_i: (\partial Q)_i \rightarrow P$.  A $\zkz$-manifold is a manifold with boundary, $Q$, with a fixed $\zkz$-structure.  We denote this by $(Q,P,\gamma_i)$.  We sometimes drop the maps from this notation and denote a $\zkz$-manifold by $(Q,P)$. 
\end{define}
\begin{remark} \label{singzkz}
From the data, $(Q,P,\gamma_i)$, we can create a singular space.  To do so, we note that the diffeomorphisms, $\{\gamma_i\}_{i=1}^{k}$, induce a diffeomeorphism between $\partial Q$ and $P\times \zkz$.  The singular space is then created by collapsing each $\{x\}\times \zkz \in P\times \zkz$ to a point.  This singular space will (usually) be denoted by $\tilde{Q}$.
\end{remark}
In Definition \ref{zkzmfld}, we have assumed that $Q$ and $P$ are both compact.  We can also consider the case when $Q$ (or both $Q$ and $P$) are not compact.  We will refer to such objects as {\it noncompact} $\zkz$-manifolds. 
\par
Many concepts from differential geometry and topology have natural generalizations from the manifold setting to the $\zkz$-manifold setting.  The generalization of vector bundles to $\zkz$-vector bundles is prototypical.  A $\zkz$-vector bundle is a pair, $(E,F)$, where $E$ is a vector bundle over $Q$, $F$ is a vector bundle over $P$, and $E|_{\partial Q}$ decomposes into k copies of $F$.  To be more precise, the identification of (i.e., isomorphism between) $E|_{\partial Q}$ and the k-copies of $F$ is also considered part of the data.  Additionally, we have natural definitions of a $\zkz$-Riemannian metric, a $\zkz$-fiber bundle, a ${\rm spin^c}$-structure on a $\zkz$-vector bundle, and a ${\rm spin^c}$-structure on a $\zkz$-manifold. The reader can see \cite[Definition 3.1]{Hig} for further details.
\par
We also have natural definitions of differentiable maps between $\zkz$-manifolds, which leads to a notion of diffeomorphism between $\zkz$-manifolds.  We will often require such maps to preserve certain additional structure (for example, the ${\rm spin^c}$-structure).  
\begin{ex}
We consider the manifold with boundary, denoted by $Q$, given in Figure \ref{z3pic} and take $P=S^1$.  Then one can easily see that $(Q,P)$ has the structure of a $\field{Z}/3$-manifold. \label{z3ex}
\end{ex}
\begin{ex}
Any compact oriented manifold without boundary is a $\zkz$-manifold for any $k$.  To see this, we take $P=\emptyset$ and note that $(M,\emptyset)$ has the structure required by Definition \ref{zkzmfld}. 
\end{ex}
Using the process described in Remark \ref{singzkz}, we can think of a $\zkz$-manifold as a singular space.  Then for any point in the singular space, there is a neighbourhood that is either diffeomorphic to a neighbourhood in $\field{R}^n$ or is of the form shown in Figure \ref{paperzkzloc}.  The number of ``sheets of paper" is equal to $k$.

\begin{figure}
    \centering
        \includegraphics[width=8cm, height=5cm]{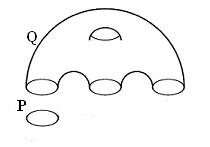} 
    \caption{$\field{Z}/3$-manifold from Example \ref{z3ex}.}
    \label{z3pic}
\end{figure}
\begin{figure}
    \centering
        \includegraphics[width=8cm, height=2.5cm]{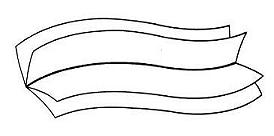} 
    \caption{Local picture of a $\field{Z}/4$-manifold.}
    \label{paperzkzloc}
\end{figure}

We now consider embeddings of $\zkz$-manifolds.  Throughout, all embeddings of manifolds with boundary will be neat embeddings (see Section 1.4 of \cite{Hir}).  
\begin{define}
Let $k$ and $n$ be fixed natural numbers.  Then, inside $\field{R}^n$, we let
\begin{eqnarray*}
H= \{(x_1, \ldots x_n) \in \field{R}^n \: | \: x_1 > 0 \} \\
H_0=\{ (x_1, \ldots x_n) \in \field{R}^n \: | \: x_1 = 0 \}
\end{eqnarray*}
Moreover, let $H^n_k$ be the space obtained by adjoining to $H$, $k$ disjoint, relatively open, unit radius disks in $H_0$.  Then $(H^n_k,\field{D}^{n-1})$ has the structure of a noncompact $\zkz$-manifold where we note that $\field{D}^{n-1}$ denotes the {\it open} unit disk.  We will denote the singular space constructed using the process described in Remark \ref{singzkz} by $\tilde{H}^n_k$. \label{HigSpa}
\end{define}
\begin{ex} \label{norBunZkz}
Let $(Q,P,\gamma_i)$ be a $\zkz$ manifold and $(H^{2N}_k,\field{D}^{2N-1})$ be the noncompact $\zkz$-manifold from Definition \ref{HigSpa}.  If we take $N$ large enough, then we have compatible embeddings $f_Q:Q \hookrightarrow H^{2N}_k$ and $f_P:P \hookrightarrow \field{D}^{2N-1}$.  The compatibility that we require is that, for each $i$, $f_Q |_{(\partial Q)_i}=(f_P \circ \gamma_i)|_{(\partial Q)_i}$ where $\{(\partial Q)_i\}_{i=1}^{k}$ is the decomposition of the boundary of $Q$ in Definition \ref{zkzmfld}.  \par
We have the following exact sequences: 
\begin{eqnarray*}
0 \rightarrow TQ \rightarrow T(H^{2N}_k)|_{Q} \rightarrow N_Q \rightarrow 0 \\
0 \rightarrow TP \rightarrow T(\field{D}^{2N-1})|_P \rightarrow N_P \rightarrow 0
\end{eqnarray*}
where $N_Q$ and $N_P$ are the normal bundles associated to the embedding $f_Q$ and $f_P$ respectively.  We then have that $(N_Q,N_P)$ is a $\zkz$-vector bundle over $(Q,P)$.  
\end{ex}
\begin{define}
Let $(Q,P,\gamma_i)$ and $(\hat{Q},\hat{P},\hat{\gamma}_i)$ be two $\zkz$-manifolds.  The {\it disjoint union} of $(Q,P,\gamma_i)$ and $(\hat{Q},\hat{P},\hat{\gamma}_i)$ is given by $(Q\dot{\cup}\hat{Q}, P\dot{\cup}\hat{P},\gamma_i \dot{\cup}\hat{\gamma_i})$, where the disjoint union of the mappings, $\gamma_i$ and $\hat{\gamma_i}$, is given by the map defined by $\gamma_i$ for points in $Q$ and by $\hat{\gamma_i}$ for points in $\hat{Q}$. \label{zkzDisUn}
\end{define}

\subsection{Bordism of $\zkz$-manifolds}
We now discuss bordism for $\zkz$-manifolds.  This concept is due to Sullivan \cite{Sul} (also see \cite{Baa}).
\begin{define}
Let $\bar{Q}$ be an $n$-dimensional, oriented, smooth, compact manifold with boundary.  In addition, assume we are given $k$ disjoint, oriented embeddings of an $(n-1)$-dimensional, oriented, smooth, compact manifold with boundary, $\bar{P}$, into $\partial \bar{Q}$.  Using the same notation as Definition \ref{zkzmfld}, we denote this as a triple $(\bar{Q},\bar{P},\gamma_i)$ (or just $(\bar{Q},\bar{P})$) where $\{\gamma_i\}_{i=1}^k$ denote the $k$ disjoint oriented embeddings.  We refer to such a triple as a $\zkz$-manifold with boundary.  The boundary of such an object is defined to be $\partial \bar{Q} - {\rm int}(k\bar{P})$ where $k\bar{P}$ denotes the $k$ copies of $\bar{P}$ in $\partial \bar{Q}$.  We note that the boundary has a natural $\zkz$-manifold structure induced by identifying the $k$ copies of the boundary of $\bar{P}$ (see Remark \ref{zkzbound} for more on the boundary).   \label{zkzmfldbound}
\end{define}
\begin{remark}
If a $\zkz-$manifold $(Q,P)$ is the boundary of the $\zkz$-manifold with boundary, $(\bar{Q},\bar{P})$, then 
\begin{eqnarray}
\partial \bar{Q} & = & Q \cup_{\partial Q} ( k\bar{P} ) \label{borEq1} \\
\partial \bar{P} & = & P \label{borEq2}
\end{eqnarray}
\label{zkzbound}
\end{remark}  
\begin{ex}  Three examples of $\zkz$-manifolds with boundary are: 
\begin{enumerate}
\item A $\zkz$-manifold is a $\zkz$-manifold with empty boundary. 
\item Using the notation of Definition \ref{zkzmfldbound}, a manifold with boundary, $\bar{Q}$, with $\bar{P}=\emptyset$ is a $\zkz$-manifold with boundary.  Moreover, its boundary when considered as a $\zkz$-manifold is the same as its boundary when considered a manifold with boundary. 
\item For any oriented $M$, $(kM \times [0,1],-M)$ is a $\zkz$-manifold with boundary.  Moreover, its boundary is $kM$. 
\end{enumerate}
\label{manBou}
\end{ex}
\begin{define} \label{borZkz}
We say that two $\zkz$-manifolds, $(Q,P)$ and $(\hat{Q},\hat{P})$, are {\it bordant} if $(Q,P)\dot{\cup}(-\hat{Q},-\hat{P})$ is a boundary in the sense of Remark \ref{zkzbound}.  The $\zkz$-manifold with boundary, $(\bar{Q},\bar{P})$ in Remark \ref{zkzbound}, will be called a bordism between $(Q,P)$ and $(\hat{Q},\hat{P})$.  We will denote this by $(Q,P) \sim_{bor}  (\hat{Q},\hat{P})$.
\end{define}
\begin{prop}
The operation $\sim_{bor}$ is an equivalence relation.
\end{prop}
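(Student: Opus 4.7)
The plan is to verify the three defining properties of an equivalence relation in turn, using the standard manifold constructions adapted to the $\zkz$-setting; the actual work is in checking compatibility with the $\zkz$-structure at each step.

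For reflexivity, given a $\zkz$-manifold $(Q,P,\gamma_i)$, I would propose the cylinder $(\bar{Q},\bar{P}) := (Q\times [0,1], P \times [0,1])$ as a bordism from $(Q,P)$ to itself. The diffeomorphisms $\gamma_i\times \mathrm{id}_{[0,1]}$ give $k$ disjoint neat embeddings of $\bar{P}$ into $\partial \bar{Q}$. The boundary of $\bar{Q}$ as an ordinary manifold is $(Q\times\{0\})\cup (Q\times\{1\})\cup (\partial Q\times[0,1])$, and removing the interiors of the $k$ copies of $\bar{P}$ leaves $Q\times\{0,1\}$, which is $(Q,P)\dot{\cup}(-Q,-P)$ with the induced $\zkz$-structure, as required by equations \eqref{borEq1}, \eqref{borEq2} of Remark \ref{zkzbound}.

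Symmetry follows by reversing orientation: if $(\bar{Q},\bar{P},\gamma_i)$ is a bordism witnessing $(Q,P)\dot{\cup}(-\hat Q,-\hat P) = \partial(\bar Q,\bar P)$, then $(-\bar{Q},-\bar{P},\gamma_i)$ is a $\zkz$-manifold with boundary whose boundary is $(\hat Q,\hat P)\dot{\cup}(-Q,-P)$, witnessing $(\hat Q,\hat P)\sim_{bor}(Q,P)$.

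For transitivity, suppose $(\bar{Q}_{12},\bar{P}_{12})$ is a bordism between $(Q_1,P_1)$ and $(Q_2,P_2)$, and $(\bar{Q}_{23},\bar{P}_{23})$ is a bordism between $(Q_2,P_2)$ and $(Q_3,P_3)$. I would glue by setting $\bar{Q} := \bar{Q}_{12}\cup_{Q_2} \bar{Q}_{23}$ and $\bar{P} := \bar{P}_{12}\cup_{P_2} \bar{P}_{23}$ along the common boundary pieces (with orientations matching as in the standard bordism argument). Using collar neighborhoods of $Q_2 \subset \partial \bar{Q}_{12}$ and of $Q_2 \subset \partial \bar{Q}_{23}$ (and the analogous collars of $P_2$), the glued spaces carry canonical smooth structures. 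The $k$ disjoint neat embeddings $\gamma_i:\bar{P}\hookrightarrow \partial \bar{Q}$ are obtained by concatenating the embeddings from each half; compatibility of collars around the $\gamma_i(\partial P_2)$ corners is what makes this concatenation smooth and neat. The boundary of the resulting $(\bar{Q},\bar{P})$ is then $(Q_1,P_1)\dot{\cup}(-Q_3,-P_3)$, establishing $(Q_1,P_1)\sim_{bor}(Q_3,P_3)$.

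The main obstacle is the corner-smoothing step in transitivity: one must verify that gluing along $Q_2$ and simultaneously along the embedded copies of $P_2$ produces a bona fide $\zkz$-manifold with boundary rather than a manifold with corners. This is handled by choosing the collars of $Q_2$ and $P_2$ compatibly (so that the collar of $kP_2\subset \partial Q_2$ inside the collar of $Q_2$ matches $k$ copies of the collar of $P_2$), and then applying the standard corner-smoothing procedure. The remaining checks — that orientation matches up, that the $\gamma_i$ remain neat embeddings after gluing, and that the identifications (\ref{borEq1})–(\ref{borEq2}) hold for the glued object — are routine once collars have been chosen correctly.
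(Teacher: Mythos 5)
Your argument is correct, but it is worth noting that the paper does not actually prove this proposition: it simply records that the result is due to Sullivan and that Baas proves it for more general classes of manifolds with singularities in \cite{Baa}. What you have written is the standard direct argument that those references carry out (in greater generality), specialized to $\zkz$-manifolds: the cylinder $(Q\times[0,1],P\times[0,1])$ with the embeddings $(\partial Q)_i\times[0,1]$ for reflexivity (consistent with item (3) of Example \ref{manBou}), orientation reversal for symmetry, and gluing along the common closed piece for transitivity. Your identification of the real content --- that gluing $\bar{Q}_{12}\cup_{Q_2}\bar{Q}_{23}$ produces corners along $\partial Q_2=kP_2$ which must be straightened compatibly with the $k$ embedded copies of $\bar{P}_{12}\cup_{P_2}\bar{P}_{23}$ --- is exactly the right point to isolate; note only that since $P_2$ is closed, the gluing $\bar{P}_{12}\cup_{P_2}\bar{P}_{23}$ itself is along full boundary components and is corner-free, and the corner locus in $\bar{Q}$ should be written as the $k$ copies of $P_2$ in $\partial Q_2$ rather than ``$\gamma_i(\partial P_2)$'' (which is empty). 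With that small notational correction, your proof supplies a complete argument for a statement the paper leaves to the literature.
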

This result is due to Sullivan, but Baas proves this result for more general classes of manifolds with singularities in \cite{Baa}.  Sullivan also shows that the bordism relation on $\zkz$-manifolds leads to bordism groups with coefficients in $\zkz$.  This connection is fundamental to the construction of our model for $K$-homology with coefficients in $\zkz$.
\par
In our model, we consider $\zkz$-bordisms which preserve the additional ${\rm spin^c}$-structure we put on our $\zkz$-manifold.  The reader should assume that, for the rest of this paper, all bordisms and $\zkz$-bordisms are ${\rm spin^c}$-bordisms.

\subsection{Index theory for $\zkz-$manifolds} \label{zkzMfldIndexSec}
In this section, we discuss the Freed-Melrose index theorem for ${\rm spin^c}$ $\zkz$-manifolds.  A special case of this theorem was proved by Freed in \cite{Fre}.  The general case is treated in \cite{FM} (see also \cite{Hig} and \cite{Ros1}).  \par
We will be most interested in the topological side of the Freed-Melrose index theorem.  We define the topological index map using a noncompact space which plays the same role as Euclidean space in the Atiyah-Singer index theorem (see \cite{AS1}).  One could also (see \cite{Ros1} for details) use a Moore space to construct the topological index.  This process would be analogous to using spheres (rather than Euclidean space) in the case of the Atiyah-Singer index theorem.     
\par  
To begin, the reader should recall that from a $\zkz$-manifold, $(Q,P)$, we can form a singular space following the process described in Remark \ref{singzkz}.  We will denote this singular space by $\tilde{Q}$.  We note that throughout this paper we will work with K-theory with compact supports.  The reader should recall or note that the $K^0(\tilde{Q})$ can be realized as the Grothendieck group of the semigroup of $\zkz$-vector bundles over $(Q,P)$ and that given an embedding of one ${\rm spin^c}$ $\zkz$-manifold into another we get a wrong-way map between the K-theories of the associated singular spaces (see \cite{Fre} for details).    
\par
We will be interested in this map in the following case.  Let $(Q,P)$ be a ${\rm spin^c}$ $\zkz$-manifold with ${\rm dim}(Q)$ even and $(H^{2N}_k, \field{D}^{2N-1})$ be the noncompact $\zkz$-manifold from Definition \ref{HigSpa}.  Then, as discussed in Example \ref{norBunZkz}, for $N$ sufficiently large there is a neat embedding 
$$i: (Q,P) \hookrightarrow (H^{2N}_k, \field{D}^{2N-1})$$
The wrong-way map induced from $i$ will be denoted by $\pi^{\tilde{Q}}_{!}$.  \par
A standard computation in K-theory shows that 
\begin{eqnarray}
K^0(\tilde{H}^{2N}_k) & \cong &  \zkz  \label{MooreK0} \\
K^1(\tilde{H}^{2N}_k) & \cong &  0 \label{MooreK1}
\end{eqnarray} 
Since ${\rm dim}(Q)$ is even, we have that the range of $\pi^{\tilde{Q}}_{!}$ is $\zkz$. 
\begin{define}
Let $(Q,P)$ be a ${\rm spin^c}$ $\zkz$-manifold and $(E,F)$ be a $\zkz$-vector bundle over it.  We denote the Dirac operator on $(Q,P)$ twisted by $(E,F)$ by $D_{(E,F)}$ (see \cite{Ros1} Definition 2.4 for more on the Dirac operator).  Let  
$${\rm ind}_{\zkz}^{top}(D_{(E,F)}):=\pi^{\tilde{Q}}_{!}([E,F])\in K^0(\tilde{H}^{2N}_k)  \cong  \zkz$$
\label{FMindexGeo}
\end{define}
If $Q$ has odd dimension, then we can produce a topological index using similar methods.  However, in this case, the $\zkz$-topological index vanishes (see \cite{Fre}).  The next theorem is the Freed-Melrose index theorem for $\zkz$-manifolds.  It is analogous to the Atiyah-Singer index theorem for manifolds.
\begin{theorem}
Let $(Q,P)$ be a ${\rm spin^c}$ $\zkz$-manifold and $D$ a twisted Dirac operator on it.  Then 
\begin{equation*}
{\rm ind}_{\zkz}^{top}(D)={\rm ind}(D^{\rm APS}) \: {\rm mod} \: k
\end{equation*}
where ${\rm ind}(D^{\rm APS})$ denotes the Fredholm index of the twisted Dirac operator with the Atiyah-Patodi-Singer boundary conditions (see \cite{APS1} for more details). 
\end{theorem}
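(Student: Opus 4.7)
The plan is to adapt the embedding proof of the Atiyah-Singer index theorem to the singular setting of $\zkz$-manifolds, with the additional care needed because we are working modulo $k$ and with APS boundary conditions. Since both sides of the claimed equality are additive under disjoint union, I can assume $(Q,P)$ is connected. I would first choose a compatible neat embedding $i:(Q,P)\hookrightarrow (H^{2N}_k,\field{D}^{2N-1})$ as in Example \ref{norBunZkz}, giving a $\zkz$-normal bundle $(N_Q,N_P)$, which inherits a natural ${\rm spin^c}$-structure from those on $(Q,P)$ and $(H^{2N}_k,\field{D}^{2N-1})$. The topological index ${\rm ind}_{\zkz}^{top}(D_{(E,F)})$ is then, by Definition \ref{FMindexGeo}, the image of $[E,F]$ under the Thom isomorphism for $(N_Q,N_P)$ followed by the inclusion-induced map into $K^0(\tilde{H}^{2N}_k)\cong \zkz$.

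The main step is to show a multiplicativity statement for the analytic side: the APS index of $D_{(E,F)}$ on $(Q,P)$ equals, modulo $k$, the APS index on the $\zkz$-disk bundle of $(N_Q,N_P)$ of the operator twisted by $(E,F)\otimes \beta$, where $\beta$ is the Thom/Bott class. In the classical (boundaryless) setting this is the excision/Thom isomorphism step that reduces Atiyah-Singer to the Euclidean case. Here I would prove it by: (a) using the product-formula for Dirac operators on the disk bundle to write its APS index as an integer combination of the APS index on $(Q,P)$ plus boundary correction terms; (b) checking that all the boundary correction terms—the reduced eta invariant and dimension of the kernel on the $k$ boundary copies of $P$ (or $N_P|_P$)—appear with a factor of $k$ coming from the $\zkz$-identification $\partial Q \cong k P$, and therefore vanish modulo $k$.

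Once this reduction is in place, both indices live on $(H^{2N}_k,\field{D}^{2N-1})$, where the calculation is essentially tautological: the analytic index of the Dirac operator twisted by a class supported in a small $\zkz$-tube in the model space is computed on a compact piece, and it matches the chosen generator of $K^0(\tilde{H}^{2N}_k)\cong \zkz$ identified in (\ref{MooreK0}). The final identity ${\rm ind}_{\zkz}^{top}(D_{(E,F)})\equiv {\rm ind}(D^{\rm APS})\pmod k$ then follows from the commutativity of the analytic and topological wrong-way maps on the model, together with the standard fact that in $K^0(\tilde{H}^{2N}_k)$ reduction modulo $k$ is built into the identification.

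The hard part will be step (b): controlling the APS correction terms after the Thom construction. The eta invariant is not a bordism invariant in general, so its vanishing modulo $k$ for the discrepancy is delicate. I expect the cleanest way to handle this is via a gluing/bordism argument in the category of ${\rm spin^c}$ $\zkz$-manifolds: exhibit a $\zkz$-bordism (in the sense of Definition \ref{borZkz}) between $(Q,P)$ and the zero-section embedding inside the disk bundle, observe that Atiyah-Patodi-Singer's gluing formula expresses the difference of APS indices as an integral of characteristic forms over the bordism (the eta contributions cancelling in pairs), and then conclude that because the bordism has its inner boundary equal to $k$ copies of a fixed manifold, the discrepancy is divisible by $k$.
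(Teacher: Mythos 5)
The paper does not prove this theorem at all: it states the Freed--Melrose index theorem and refers the reader to \cite{FM}, \cite{Hig}, \cite{Ros1}, and \cite{Zha} for proofs. So there is no internal argument to compare against; what you have written is an outline of an independent proof, and it has to be judged on its own. Your overall strategy (embed into $(H^{2N}_k,\field{D}^{2N-1})$, reduce by a Thom/excision step to the model space, and control the APS boundary corrections using the fact that $\partial Q\cong kP$) is the right general shape --- it is close in spirit to the arguments of Freed--Melrose and especially Zhang --- but the central step (b) has a genuine gap as stated.

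The problem is the assertion that the boundary correction terms ``appear with a factor of $k$ \ldots and therefore vanish modulo $k$.'' The APS correction $\tfrac{1}{2}(\eta+h)$ attached to a boundary component is a real number, not an integer, so the fact that it occurs $k$ times (once for each copy of $P$ in $\partial Q$) gives you $k\cdot\xi(P)$ for a real number $\xi(P)$; this is not ``$0$ mod $k$'' in any sense that helps, since reduction mod $k$ is only defined on integers. The same objection applies to the closing gluing argument: knowing that the inner boundary of the bordism consists of $k$ copies of a fixed manifold shows the total eta contribution is $k$ times a real number, and to conclude that the integer discrepancy of APS indices is divisible by $k$ you must first show that the per-copy contribution, combined with the appropriate portion of the interior characteristic-class integral, is itself an integer. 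In Zhang's proof this is exactly where the work lies: one applies the APS theorem a second time, on the manifold $\bar P$ with $\partial\bar P=P$, to trade $\xi(P)$ for an integer index plus an integral over $\bar P$, and one must also handle the corner structure of the bordism $\bar Q$ whose boundary contains both $Q$ and $k\bar P$. Separately, your final step on the model space is not ``essentially tautological'': $H^{2N}_k$ is a noncompact singular space, and constructing an analytic index there and identifying it with the generator of $K^0(\tilde H^{2N}_k)\cong\zkz$ from (\ref{MooreK0}) is a substantial part of \cite{FM} and \cite{Hig}, not a formality. Your sketch is a reasonable roadmap, but these two points are where the theorem actually gets proved, and neither is yet supplied.
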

Proofs of this result can be found in \cite{FM}, \cite{Hig}, \cite{Ros1}, and \cite{Zha}. 
We will need two properties of this index.  Proofs of these properties follow (more or less directly) from results in \cite{Fre}. 
\begin{theorem} \label{borInv}
The $\zkz$-index is a ${\rm spin^c}$ $\zkz$ cobordism invariant.
\end{theorem}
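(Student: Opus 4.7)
The statement is equivalent to the null-bordism version: if $(Q,P)$ is the ${\rm spin^c}$ $\zkz$-boundary of $(\bar Q,\bar P)$ and $(E,F)$ extends to a $\zkz$-vector bundle $(\bar E,\bar F)$ over $(\bar Q,\bar P)$, then ${\rm ind}^{top}_{\zkz}(D_{(E,F)})=0$ in $\zkz$. Indeed, given a bordism between $(Q_0,P_0)$ and $(Q_1,P_1)$, its $\zkz$-boundary is $(Q_0,P_0)\,\dot\cup\,(-Q_1,-P_1)$; reversing the ${\rm spin^c}$-orientation flips the sign of the topological index, so vanishing on the null-bordism forces the two original indices to agree.

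For $N$ sufficiently large I would construct a neat ${\rm spin^c}$ $\zkz$-embedding
\[
(\bar Q,\bar P)\ \hookrightarrow\ (H^{2N}_k\times[0,1],\,\field{D}^{2N-1}\times[0,1])
\]
with product structure near $\partial\bar Q$, arranged so that the boundary embeds as $(Q,P)\hookrightarrow(H^{2N}_k\times\{0\},\,\field{D}^{2N-1}\times\{0\})$ via the standard embedding of Example \ref{norBunZkz}, and so that $\bar Q$ lies inside $H^{2N}_k\times[0,1)$, missing the $t=1$ end. This is a relative version of Example \ref{norBunZkz}: first embed the boundary, then extend into the interior using a collar together with general position, equipping the normal bundle with the ${\rm spin^c}$-structure inherited from $T\bar Q$ and $T(H^{2N}_k\times[0,1])$.

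As in \cite{Fre}, this produces a Gysin map
\[
\pi^{\bar{\tilde Q}}_{!}\colon K^0(\bar{\tilde Q})\ \longrightarrow\ K^0(\tilde H^{2N}_k\times[0,1])
\]
defined via the ${\rm spin^c}$-Thom class of the normal bundle followed by the open inclusion of a tubular neighbourhood. Naturality under restriction to a boundary component yields $i^{*}_{\{0\}}\bigl(\pi^{\bar{\tilde Q}}_{!}[\bar E,\bar F]\bigr)=\pi^{\tilde Q}_{!}[E,F]={\rm ind}^{top}_{\zkz}(D_{(E,F)})$, while $i^{*}_{\{1\}}\bigl(\pi^{\bar{\tilde Q}}_{!}[\bar E,\bar F]\bigr)=0$ because $\bar Q$ avoids the end $\tilde H^{2N}_k\times\{1\}$.

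Both inclusions $\tilde H^{2N}_k\times\{i\}\hookrightarrow\tilde H^{2N}_k\times[0,1]$ are homotopy equivalences, so $i^{*}_{\{0\}}=i^{*}_{\{1\}}$ on $K^0(\tilde H^{2N}_k\times[0,1])\cong\zkz$. Applied to $\pi^{\bar{\tilde Q}}_{!}[\bar E,\bar F]$, this forces ${\rm ind}^{top}_{\zkz}(D_{(E,F)})=0$. The main obstacle I expect is the relative embedding in the second paragraph: arranging it to be neat, compatible with the $\zkz$-structure so that the $k$ boundary copies of $\bar P$ all land correctly over $\field{D}^{2N-1}\times[0,1]$, contained in $H^{2N}_k\times[0,1)$, and with a normal bundle carrying a canonical ${\rm spin^c}$-structure extending the one on $\partial\bar Q$. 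Once this is in place, the rest is formal naturality, directly parallel to the standard null-bordism proof that the Atiyah--Singer topological index vanishes on a boundary.
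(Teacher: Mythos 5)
Your argument is correct and is essentially the proof the paper has in mind: the paper itself gives no proof of Theorem \ref{borInv}, stating only that it follows ``more or less directly'' from results in \cite{Fre}, and what you write out --- reduction to the null-bordant case, a relative neat $\zkz$-embedding of the bordism into $(H^{2N}_k\times[0,1],\field{D}^{2N-1}\times[0,1])$, and homotopy invariance of the restrictions to the two ends --- is precisely the standard Freed-style embedding argument being invoked. The one technical point you flag, the existence of the compatible relative embedding (including the corner where $Q$ meets the $k$ copies of $\bar P$), is genuine but is the same issue already assumed resolved in Example \ref{norBunZkz}, so your proof is complete at the level of rigour the paper operates at.
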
 
To state the other property of the Freed-Melrose index, we will need to introduce some notation.  Let $M$ be a fixed compact ${\rm spin^c}$-manifold without boundary.  The reader should recall that a fiber bundle over a $\zkz$-manifold $(Q,P)$ with fiber $M$, is a $\zkz$-manifold, $(E,F)$, and a pair of (compatible) fiber bundles, $\pi_Q^M: E \rightarrow Q$ and  $\pi_P^M: F \rightarrow P$ (where both $\pi_Q^M$ and $\pi_P^M$ have fiber $M$; see \cite{Hig} for details).  \par
Let $(Q,P)$ be a ${\rm spin^c}$ $\zkz$-manifold and $(E,F)$ be a fiber bundle over it.  Moreover, assume $(E,F)$ has a fixed ${\rm spin^c}$-structure which is compatible with both the ${\rm spin^c}$-structure on $(Q,P)$ and the ${\rm spin^c}$-structure on $M$.  Then, the direct image map in K-theory, 
$$\pi^{M}_{!}:K^*(\tilde{E}) \rightarrow K^*(\tilde{Q})$$
is well-defined and satisfies
\begin{equation}
\pi^{\tilde{E}}_{!}=\pi^{\tilde{Q}}_{!} \circ \pi^{M}_{!} \label{multZkzProp} 
\end{equation}
where $\pi^{\tilde{E}}_{!}$ and $\pi^{\tilde{Q}}_{!}$ are the $\zkz$-direct image map discussed above.  Since the $\zkz$-topological index is defined in terms of the direct image map, it also has this property (i.e., satisfies Equation \ref{multZkzProp}).  This property is the $\zkz$-version of the multiplicative property of the index (for closed manifolds) discussed in \cite{AS1}.  More details on the direct image map for $\zkz$-manifolds can be found in \cite{Fre} (see p. 246-247). 

\section{A model for $K_*(X; \zkz)$} 
\subsection{Definition of the model for $K_*(X;\zkz)$}
\begin{define}
Let $X$ be a compact Hausdorff space.  A $\zkz$-cycle (over $X$) is a triple, $((Q,P),(E,F),f)$, where $(Q,P)$ is a ${\rm spin^c}$ $\zkz$-manifold, $(E,F)$ is a smooth Hermitian $\zkz$-vector bundle over $(Q,P)$ and $f$ is a continuous map (in the $\zkz$-sense) from $(Q,P)$ to $X$. \label{zkzcyc}
\end{define}
Here, a continuous map from $(Q,P)$ to $X$ in the $\zkz$-sense is a pair of continuous maps $f_Q: Q \rightarrow X$ and $f_P: P \rightarrow X$ such that (in the notation of Definition \ref{zkzmfld}) $$f_Q|_{(\partial Q)_i}=f_P$$ 
for each $i=1,\ldots k$.  
\par 
Functions satisfying this property are in one-to-one correspondence with the continuous functions on the singular space described in Remark \ref{singzkz}.  Also, the manifolds $Q$ (and $P$) in a $\zkz$-cycle, $((Q,P),(E,F),f)$, may not be connected.  In fact, $Q$ (and $P$) can have components with differing dimensions.  Also, the vector bundles on different components many have differing fiber dimensions.
\begin{define}
Given a $\zkz$-cycle, $((Q,P),(E,F),f)$, we will denote its opposite by $-((Q,P),(E,F),f)=(-(Q,P),(E,F),f)$ where $-(Q,P)$ is the ${\rm spin^c}$ $\zkz$-manifold with the opposite ${\rm spin^c}$ structure (see Definition 4.8 in \cite{BHS} for more on the opposite ${\rm spin^c}$ structure).  \label{oppzkz}
\end{define}
We now define the operations and relations on $\zkz$-cycles.  The reader should note the similarity with the operations and relations defined on the cycles from the Baum-Douglas model (see Section 5 of \cite{BHS}).  
\begin{define}
Let $((Q,P),(E,F),f)$ and $((\hat{Q},\hat{P}),(\hat{E},\hat{F}),\hat{f})$ be $\zkz$-cycles.  Then the disjoint union of these cycles is given by the cycle
\begin{equation*}
((Q\dot{\cup}\hat{Q},P\dot{\cup}\hat{P}),(E\dot{\cup}\hat{E},F\dot{\cup}\hat{F}),f\dot{\cup}\hat{f})
\end{equation*}
where the disjoint union of $\zkz$-manifolds is defined in Definition \ref{zkzDisUn}, $E\dot{\cup} \hat{E}$ is the vector bundle with fibers given by $(E \dot{\cup} \hat{E})|_{x}= E|_x$ or $\hat{E}|_{x}$ (depending on whether $x\in E$ or $\hat{E}$), and $f \dot{\cup} \hat{f}$ is defined to be $f(x)$ if $x\in Q$ and $\hat{f}(x)$ if $x\in \hat{Q}$.
\end{define}
\begin{define} \label{zkzCycleBordism}
We say a $\zkz$-cycle, $((Q,P),(E,F),f)$, is a boundary if there exists 
\begin{enumerate}
\item a smooth compact ${\rm spin^c}$ $\zkz$-manifold with boundary, $(\bar{Q},\bar{P})$, 
\item a smooth Hermitian $\zkz$-vector bundle $(\bar{E},\bar{F})$ over $(\bar{Q},\bar{P})$, 
\item a continuous map $\Phi :(\bar{Q},\bar{P}) \rightarrow X$, 
\end{enumerate}
such that $(Q,P)$ is the $\zkz$-boundary of $(\bar{Q},\bar{P})$, $(E,F)= (\bar{E},\bar{F})|_{\partial (\bar{Q},\bar{P})}$, and $f=\Phi|_{\partial (\bar{Q},\bar{P})}$.  We say that $((Q,P),(E,F),f)$ is bordant to $((\hat{Q},\hat{P}),(\hat{E},\hat{F}),\hat{f})$ if 
\begin{equation*}
((Q,P),(E,F),f)\dot{\cup} -((\hat{Q},\hat{P}),(\hat{E},\hat{F}),\hat{f})
\end{equation*}
is a boundary.
\end{define}
We now define vector bundle modification for $\zkz$-cycles.   We consider the following setup.  Let $(Q,P)$ be a ${\rm spin^c}$ $\zkz$-manifold and $(W,V)$ be a ${\rm spin^c}$-vector bundle over $(Q,P)$ with even dimensional fibers.  We denote the trivial rank one real $\zkz$-vector bundle by $({\bf 1}_Q,{\bf 1}_P)$.  The $\zkz$-vector bundle, $(W\oplus {\bf 1}_Q, V\oplus {\bf 1}_P)$, is a ${\rm spin^c}$ $\zkz$-vector bundle.  Moreover, its total space is a noncompact $\zkz$-manifold and its components fit into the following exact sequences.
\begin{eqnarray*}
0 \rightarrow \tilde{\pi}_W^*(W\oplus {\bf 1}_Q) \rightarrow T(W\oplus {\bf 1}_Q) \rightarrow \tilde{\pi}_W^*(TQ) \rightarrow 0 \\
0 \rightarrow \tilde{\pi}_V^*(V\oplus {\bf 1}_P) \rightarrow T(V\oplus {\bf 1}_P) \rightarrow \tilde{\pi}_V^*(TP) \rightarrow 0
\end{eqnarray*}
where we have that $\tilde{\pi}_W:W\oplus {\bf 1}_Q \rightarrow Q$ and $\tilde{\pi}_V:V\oplus {\bf 1}_P \rightarrow P$.
By choosing compatible splittings, we have 
\begin{eqnarray*}
T(W\oplus {\bf 1}_Q) &  \cong & \tilde{\pi}_W^*(W\oplus {\bf 1}_Q) \oplus \tilde{\pi}_W^*(TQ) \\
T(V\oplus {\bf 1}_P) & \cong & \tilde{\pi}_V^*(V\oplus {\bf 1}_P) \oplus \tilde{\pi}_V^*(TP)
\end{eqnarray*}
This identification puts a ${\rm spin^c}$-structure on the $\zkz$-manifold given by the total space of $(W\oplus {\bf 1}_Q,V\oplus {\bf 1}_P)$.  Moreover, the ${\rm spin^c}$-structure is unique up to concordance (i.e., different splittings give concordant ${\rm spin^c}$-structures).  Finally, we denote the sphere bundles of $W\oplus {\bf 1}_Q$ and $V\oplus {\bf 1}_P$ by $Z_Q$ and $Z_P$ respectively.  We have a natural ${\rm spin^c}$ $\zkz$-structure, induced from $(W\oplus {\bf 1}_Q,V\oplus {\bf 1}_P)$, on $(Z_Q,Z_P)$. 
\begin{define} \label{vbModzkz}
Let $((Q,P),(E,F),f)$ be a $\zkz$-cycle and $(W,V)$ an even dimensional ${\rm spin^c}$ $\zkz$-vector bundle over $(Q,P)$.  Using the notation and results of the previous paragraphs, we have that $(Z_Q,Z_P)$, the sphere bundle of $(W\oplus {\bf 1}, V\oplus {\bf 1})$, is a ${\rm spin^c}$ $\zkz$-manifold.  Moreover, the vertical tangent bundle of $(Z_Q,Z_P)$, denoted by $(V_Q,V_P)$, is a ${\rm spin^c}$ $\zkz$-vector bundle over $(Z_Q,Z_P)$.  We then let $(S_{Q,V},S_{P,V})$ be the reduced spinor bundle associated to $(V_Q,V_P)$ and let $(\hat{E},\hat{F})$ be the even part of the dual of $(S_{Q,V},S_{P,V})$.  The vector bundle modification of $((Q,P),(E,F),f)$ by $(W,V)$ is the $\zkz$-cycle $((Z_Q,Z_P),(\hat{E} \otimes \pi^*(E),\hat{F}\otimes \pi^*(F)),f \circ \pi)$ where $\pi$ denotes the bundle projection.  We will also use the notation $((Q,P),(E,F),f)^{(W,V)}$ to denote the vector bundle modification of $((Q,P),(E,F),f)$ by $(W,V)$. 
\end{define}
\begin{remark} \label{remZkzVBM}
It is worth noting that $(Q,E,f)$ is a Baum-Douglas cycle with boundary and $(P,F,f|_P)$ is a Baum-Douglas cycle.  Moreover, the $\zkz$-vector bundle modification of $((Q,P),(E,F),f)$ by $(W,V)$ can be thought of as the Baum-Douglas vector bundle modification of the cycles $(Q,E,f)$ and $(P,F,f|_P)$ by $W$ and $V$ respectively.  \par
In particular, if we take a $\zkz$-cycle coming from a ${\rm spin^c}$-manifold without boundary, $M$, then a vector bundle modification in the $\zkz$ sense corresponds to a vector bundle modification in the sense of Baum-Douglas.   
\end{remark}
\begin{define} \label{zkzModel}
We define $K_*(X;\zkz)$ to be the set of equivalence classes of $\zkz$-cycles where the equivalence relation is generated by the following.
\begin{enumerate}
\item  If $((Q,P),(E_1,F_1),f)$ and $((Q,P),(E_2,F_2),f)$ are $\zkz$-cycles (with the same ${\rm spin^c}$ $\zkz$-manifold, $(Q,P)$, and map $f$), then 
\begin{equation*}
((Q\dot{\cup}Q, P\dot{\cup}Q),(E_1\dot{\cup}E_2,F_1\dot{\cup}F_2),f\dot{\cup}f) \sim ((Q,P),(E_1\oplus E_2,F_1\oplus F_2), f)
\end{equation*}
\item If $((Q,P),(E,F),f)$ and $((\hat{Q},\hat{P}),(\hat{E},\hat{F}),\hat{f})$ are bordant $\zkz$-cycles, then 
\begin{equation*}
((Q,P),(E,F),f) \sim ((\hat{Q},\hat{P}),(\hat{E},\hat{F}),\hat{f})
\end{equation*}
\item If $((Q,P),(E,F),f)$ is a $\zkz$-cycle and $(W,V)$ is an even-dimensional ${\rm spin^c}$-vector bundle over $(Q,P)$, then 
we define $((Q,P),(E,F),f)$ to be equivalent to the vector bundle modification (as described in Definition \ref{vbModzkz}) of this cycle by $(W,V)$.
\end{enumerate}
\end{define}
We note that the grading on $K_*(X;\zkz)$ is given as follows: $K_0(X;\zkz)$ (resp. $K_1(X;\zkz)$) is the set of equivalence classes of $\zkz$-cycles for which each component of $Q$ (recall that $Q$ is not necessarily connected) is even (resp. odd) dimensional.  
\begin{prop}
The set $K_*(X;\zkz)$ is a graded abelian group with the operation of disjoint union.  In particular, the identity element is given by the class of the trivial cycle (i.e., the cycle $(\emptyset,\emptyset,\emptyset)$) and the inverse of a cycle is given by its opposite cycle (see Definition \ref{oppzkz}).
\end{prop}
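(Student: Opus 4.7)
The plan is to verify the group axioms for $K_*(X;\zkz)$ by imitating the classical Baum--Douglas argument, with only the cylinder construction requiring real care in the $\zkz$-setting. First I would check that disjoint union descends to equivalence classes: relation (1) is manifestly stable under disjoint union with a third cycle, relation (2) is stable because the disjoint union of a bordism with a trivial cylinder bordism is again a bordism, and relation (3) is stable because vector bundle modification of a disjoint union is (up to the obvious $\zkz$-diffeomorphism) the disjoint union of vector bundle modifications. Associativity and commutativity of $\dot{\cup}$ follow at once from the $\zkz$-diffeomorphisms $(A\dot{\cup} B)\dot{\cup} C \cong A\dot{\cup}(B\dot{\cup} C)$ and $A\dot{\cup} B \cong B\dot{\cup} A$, interpreted via the trivial bordism (each $\zkz$-diffeomorphism of cycles yields a bordism by taking the mapping cylinder).

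Next, the empty cycle $(\emptyset,\emptyset,\emptyset)$ is a legitimate $\zkz$-cycle (with $k$ vacuous diffeomorphisms from the empty boundary to the empty $P$) and is a two-sided identity for $\dot{\cup}$. Grading compatibility is immediate: since disjoint union does not alter the dimension of any component of $Q$, it preserves the parity condition that separates $K_0$ from $K_1$.

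The main step, and the only one that uses the $\zkz$-structure nontrivially, is the existence of inverses. Given a cycle $c=((Q,P),(E,F),f)$, I claim $[c]+[-c]=0$, with $-c$ as in Definition \ref{oppzkz}. The candidate bordism is the cylinder $(\bar{Q},\bar{P})=(Q\times[0,1],P\times[0,1])$ equipped with the obvious ${\rm spin^c}$-structure, the pulled-back $\zkz$-vector bundle $(\pi_Q^*E,\pi_P^*F)$, and the pulled-back map $f\circ\pi_Q$. The ordinary boundary of $\bar{Q}$ is $(Q\times\{0,1\})\cup(\partial Q\times[0,1])$; under the decomposition $\partial Q\cong k P$ coming from the $\{\gamma_i\}$, the piece $\partial Q\times[0,1]$ decomposes as $k$ copies of $\bar{P}=P\times[0,1]$, embedded in $\partial\bar{Q}$ in the way required by Definition \ref{zkzmfldbound}. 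The $\zkz$-boundary of $(\bar{Q},\bar{P})$ is therefore $Q\times\{0,1\}$ with its induced ${\rm spin^c}$-structure, i.e.\ $(Q,P)\dot{\cup}(-Q,-P)$, and the induced $\zkz$-vector bundle and map on the boundary coincide with those of $c\dot{\cup}(-c)$. Hence $c\dot{\cup}(-c)$ is a $\zkz$-boundary, so $[-c]$ is the inverse of $[c]$.

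The potentially delicate point is the bookkeeping at $\partial Q\times[0,1]$: one must check that the $\zkz$-structure on the cylinder is arranged so that the $k$ copies of $\bar P$ sitting inside $\partial\bar Q$ share a common boundary with $Q\times\{0,1\}$ exactly along the two copies of $\partial Q$, with orientations consistent with Remark \ref{zkzbound}. This is where I expect the main obstacle; but since the $\zkz$-structure on $\bar Q$ is simply the product of the $\zkz$-structure on $Q$ with $[0,1]$, the diffeomorphisms $\gamma_i\times\mathrm{id}_{[0,1]}$ provide the required identifications, and the orientation on $Q\times\{1\}$ is opposite to that on $Q\times\{0\}$ by the standard outward-normal convention, matching $-(Q,P)$.
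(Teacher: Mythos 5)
Your proposal is correct and follows essentially the same route as the paper: the paper's proof simply asserts that disjoint union gives an abelian semigroup with the empty cycle as identity and that a cycle union its opposite is a boundary, which is exactly the cylinder bordism $(Q\times[0,1],P\times[0,1])$ you construct explicitly. The extra bookkeeping you supply (well-definedness on equivalence classes and the identification $\partial Q\times[0,1]\cong k(P\times[0,1])$ via $\gamma_i\times\mathrm{id}$) is the content the paper leaves implicit, and it checks out.
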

\begin{proof}
The operation of disjoint union clearly gives the structure of an abelian semigroup.  It is also clear that the trivial cycle is an additive identity.  To produce an inverse for a cycle, we note that any cycle which is a boundary represents the additive identity of the group and the union of any cycle with its opposite is a boundary.  In other words, the opposite of a cycle provides an additive inverse.
\end{proof}

\subsection{The Bockstein sequence}
We now define the Bockstein exact sequence for the groups, $K_0(X;\zkz)$ and $K_1(X;\zkz)$.  Our Bockstein exact sequence for K-homology is analogous to both the Bockstein exact sequence for bordism defined in \cite{MS} and the long exact sequence in K-homology (see \cite{BHS} or \cite{Rav}).  For the proof of exactness of the Bockstein sequence, we follow the proof that the Baum-Douglas model of relative $K$-homology has a long exact sequence.  The difficulty in the proof of the latter is the determination of concise conditions, in terms of the equivalence relations, which lead to a trivial Baum-Douglas cycle.  We face a similar problem here.  To overcome it, we use an idea of Jakob (see \cite{Jak} and also \cite{Rav}) and define the notion of normal bordism for $\zkz$-cycles.   Theorem \ref{norBorzkz} is a natural generalization of Corollary 4.5.16 of \cite{Rav} to the $\zkz$ setting.  \par
The model for K-homology defined in \cite{Rav} uses K-theory classes in place of vector bundles.  As such, to apply the results of \cite{Rav} directly, we need a slightly different model for $K_*(X;\zkz)$.  For cycles, we take $((Q,P),\varepsilon,f)$ where $(Q,P)$ and $f$ are as in Definition \ref{zkzcyc} and $\varepsilon$ is an element in $K^0(\tilde{Q})$.  Recall that $\tilde{Q}$ is the singular space associated with $(Q,P)$ (see Remark \ref{singzkz}) and that elements of $K^0(\tilde{Q})$ are given by formal difference of (isomorphism classes of) $\zkz$-vector bundles over $Q$.  The opposite of a cycle and disjoint union of cycles are defined in essentially the same way as in the previous section. \par  
The equivalence relation on these cycles is generated by bordism and vector bundle modification.  The former is defined as in Definition \ref{zkzCycleBordism} with the vector bundle data replaced by K-theory data.  For the latter, we use the $\zkz$-version of the definition of vector bundle modification in Section 4.2 of \cite{Rav}.  The lack of a disjoint union/direct sum relation is explained by the fact that this relation is contained in the relation generated by bordism and vector bundle modification.  For a proof of this fact in the Baum-Douglas setting, see Proposition 4.2.3 in \cite{Rav}; the proof given there generalizes to the $\zkz$-setting with only minor changes.  \par
It is important to keep track of which of the two models for K-homology and K-homology with coefficients in $\zkz$ are in use.  To ensure clarity, we denote K-theory classes exclusively by $\varepsilon$ and $\nu$ (with appropriate subscripts/superscripts).  Also, in Section \ref{norZkz}, the models in use will be the versions using K-theory classes rather than vector bundles.  The only other place in the paper where the models using K-theory classes are used is in the proof of the exactness of the Bockstein sequence (see Theorem \ref{Bockstein}).

\subsubsection{Normal bordism for manifolds and $\zkz$ manifolds} \label{norZkz}
The following notation is used in this section.  We denote the equivalence relation of the Baum-Douglas model or of our $\zkz$-model (see Definition \ref{zkzModel}), depending on context, by $\sim$ and the bordism relation by $\sim_{bor}$.  If $(M,\varepsilon,f)$ is a Baum-Douglas cycle and $V$ is a ${\rm spin^c}$-vector bundle over $M$ with even dimensional fibers, then the vector bundle modification of $(M,\varepsilon,f)$ by $V$ is written as: $$(M,\varepsilon,f)^V$$
Similar notation is used for the vector bundle modification of $\zkz$-cycles.   We recall that throughout $X$ denotes a finite CW-complex.       
\begin{define}
Let $M$ be a manifold and $E$ be a vector bundle over it.  Then $N_E$ is a complementary bundle for $E$, if $E\oplus N_E$ is a trivial vector bundle over $M$.  A normal bundle for $M$ will refer to a complementary bundle for $TM$. 
\end{define}
\begin{theorem}
Let $(M,\varepsilon,f)$ be a Baum-Douglas cycle over $X$.  Then $(M,\varepsilon,f)$ represents the zero element in $K_*(X)$ if and only if there exists a normal bundle, $N$, for $M$ such that $(M,\varepsilon,f)^N$ is a boundary.
\label{norBor}
\end{theorem}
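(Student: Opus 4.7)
The plan is to prove the two implications separately.

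\textbf{The ``if'' direction.} Suppose there is a normal bundle $N$ for $M$ with $(M,\varepsilon,f)^N$ a boundary. Since boundaries represent $0$, we have $(M,\varepsilon,f)^N = 0$ in $K_*(X)$, and by the vector bundle modification relation, $(M,\varepsilon,f) \sim (M,\varepsilon,f)^N$. Hence $(M,\varepsilon,f) = 0$.

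\textbf{The ``only if'' direction.} Introduce an auxiliary relation: two cycles $(M_i,\varepsilon_i,f_i)$, $i=1,2$, are \emph{normally bordant} if there exist normal bundles $N_1$, $N_2$ on $M_1$, $M_2$ such that $(M_1,\varepsilon_1,f_1)^{N_1}$ is bordant to $(M_2,\varepsilon_2,f_2)^{N_2}$. A cycle is normally bordant to the empty cycle exactly when some normal bundle modification of it is a boundary, so the theorem reduces to showing normal bordism coincides with $\sim$. The direction ``normally bordant implies $\sim$-equivalent'' is immediate, since a normal bordism is a composition of a vector bundle modification and a bordism.

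For the reverse inclusion, first verify that normal bordism is an equivalence relation: reflexivity and symmetry are immediate, and transitivity follows from the fact that any two normal bundles of $M$ differ by a trivial bundle after stabilization, so their modifications are bordant. Then check the generators of $\sim$. Bordism is straightforward: a normal bundle of the bordism itself restricts to compatible normal bundles on the two ends, producing a bordism between the modifications. The heart of the argument is that vector bundle modification implies normal bordism: given an even-dimensional ${\rm spin^c}$-bundle $W$ over $M$, choose a complementary $W'$ with $W \oplus W' \cong \mathbf{1}^{2m}$, and apply the composition law for sphere-bundle constructions together with the multiplicativity of the reduced spinor class to identify an iterated modification $((M,\varepsilon,f)^W)^{N'}$ with a single normal-bundle modification of $(M,\varepsilon,f)$ up to bordism.

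\textbf{The main obstacle} is carrying out this absorption in the ${\rm spin^c}$ category. One must exhibit an explicit ${\rm spin^c}$-bordism between an iterated sphere-bundle modification and a single one, tracking carefully the induced ${\rm spin^c}$-structures and the effect on the K-theory class (the product of reduced spinor classes must match the class produced by the combined modification). Once this compatibility is established, any cycle representing $0$ in $K_*(X)$ is normally bordant to the empty cycle, yielding the required normal bundle $N$ with $(M,\varepsilon,f)^N$ a boundary.
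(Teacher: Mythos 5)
Your proposal is correct and takes essentially the same route as the paper: the paper states Theorem \ref{norBor} without proof (citing Corollary 4.5.16 of \cite{Rav}), but the normal-bordism argument you outline — stable isomorphism of normal bundles for transitivity, restriction (plus a trivial summand) of a normal bundle of the bordism for the bordism generator, and absorption of a vector bundle modification into a normal-bundle modification via the composition law for iterated sphere-bundle constructions — is exactly the argument the paper carries out for the $\zkz$-analogue, Corollary \ref{norBorzkz}. The ``main obstacle'' you identify is precisely Lemma \ref{RavLemmaZkz} (Lemma 4.4.3 of \cite{Rav}), which the paper likewise invokes rather than reproves.
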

We will need the following lemma which is a generalization of Lemma 4.4.3 in \cite{Rav}.  The proof given in \cite{Rav} generalizes without major change; the details are left as an exercise for the reader.  
\begin{lemma} \label{RavLemmaZkz}
Let $((Q,P),\varepsilon,f)$ be a $\zkz$-cycle.  Then for any even dimensional ${\rm spin^c}$ $\zkz$-vector bundles, $(E_0,F_0)$ and $(E_1,F_1)$, we have that 
\begin{equation*}
((Q,P),\varepsilon,f)^{(E_0\oplus E_1, F_0\oplus F_1)} \sim_{bor} (((Q,P),\varepsilon,f)^{(E_0,F_0)})^{p^*(E_1,F_1)}
\end{equation*} 
where $p$ denote the projection $(E_0,F_0) \rightarrow (Q,P)$.
\end{lemma}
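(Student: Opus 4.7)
The plan is to lift the Baum-Douglas argument of \cite[Lemma 4.4.3]{Rav} to the $\zkz$ setting by running it in parallel on the pair $(Q,P)$ and assembling the resulting bordisms into a single $\zkz$-bordism.

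First, I would recall how the manifold statement is proved: given a Baum-Douglas cycle $(M,\varepsilon,f)$ and even-dimensional ${\rm spin^c}$-vector bundles $E_0, E_1$ over $M$, one constructs an explicit cobordism over $M$ between the sphere bundle $S((E_0\oplus E_1)\oplus {\bf 1})$ and the iterated sphere bundle $S(p^*E_1\oplus {\bf 1}) \to S(E_0\oplus {\bf 1})$, together with a ${\rm spin^c}$-structure on its total space, the pullback of $\varepsilon$, and the composition of the projection with $f$. This cobordism construction is entirely natural in $M$: it depends only on the ${\rm spin^c}$-vector bundle data.

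Next, I would apply this construction separately to $Q$ with the pair $(E_0,E_1)$ and to $P$ with the pair $(F_0,F_1)$, producing cobordisms $\bar{Q}$ and $\bar{P}$. The defining compatibility of a $\zkz$-vector bundle, namely that $E_i|_{\partial Q}$ is identified with $k$ copies of $F_i$ through the gluings $\gamma_i$, together with the naturality of the construction above, yields a canonical identification of (part of) $\partial \bar{Q}$ with $k\bar{P}$ which respects the ${\rm spin^c}$-structure, the pulled-back K-theory class, and the map to $X$. This identification is exactly what promotes $(\bar{Q},\bar{P})$ to a ${\rm spin^c}$ $\zkz$-manifold with boundary in the sense of Definition \ref{zkzmfldbound}, whose $\zkz$-boundary is the disjoint union of the two cycles in the statement; this realizes the desired bordism in the sense of Definition \ref{zkzCycleBordism}.

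The main obstacle is bookkeeping rather than anything conceptual. One must check that the splittings used to induce ${\rm spin^c}$-structures on the sphere bundles (as in the discussion preceding Definition \ref{vbModzkz}), the boundary gluings $\gamma_i$, and the class $\varepsilon \in K^0(\tilde{Q})$ pulled back through the projection of each sphere bundle construction all interact compatibly on $\partial \bar{Q} \cong k\bar{P}$. Once these compatibilities are recorded, the $\zkz$-bordism exhibiting the claimed relation is, fiberwise over $(Q,P)$, the cobordism of \cite{Rav}, so the proof indeed requires only minor modification of the Baum-Douglas argument.
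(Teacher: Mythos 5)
Your proposal is correct and follows exactly the route the paper intends: the paper does not write out a proof but states that the argument of \cite[Lemma~4.4.3]{Rav} ``generalizes without major change,'' and your plan of running that cobordism construction in parallel over $Q$ and $P$ and using the $\zkz$-vector bundle compatibility to assemble a single $\zkz$-bordism is precisely that generalization. The compatibility bookkeeping you flag is indeed the only content of the exercise left to the reader.
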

\begin{define}
Let $(Q,P)$ be a $\zkz$-manifold and $(E,F)$ a $\zkz$-vector bundle over it.  Then $(E,F)$ is $\zkz$-trivial if $E$ is a trivial vector bundle over $Q$.  This implies that $F$ is trivial over $P$. \label{zkzNorBundle}
\end{define}
Of course, if $(Q,P)$ is a $\zkz$-manifold, then any trivial bundle over $Q$ has a natural $\zkz$-vector bundle structure.  
\begin{define}
Let $(Q,P)$ be a $\zkz$-manifold and $(E,F)$ a $\zkz$-vector bundle over it.  Then a $\zkz$-normal bundle for $(E,F)$ is a $\zkz$-vector bundle, $(N_E,N_F)$, over $(Q,P)$ such that $(E,F)\oplus (N_E,N_F)$ is $\zkz$-trivial (i.e., $E\oplus N_N$ is a trivial vector bundle).  In fact, we also require the trivialization to be compatible with the bundle maps associated to $(E,F)$ and $(N_E,N_F)$.  A $\zkz$-normal bundle for $(Q,P)$ will refer to a $\zkz$-normal bundle for $(TQ,T(P\times (0,1])|_P)$.  For notational convenience, we denote a $\zkz$-normal bundle for $(Q,P)$ simply as $N$. 
\end{define}
The existence of a $\zkz$-normal bundle for an arbitrary ${\rm spin^c}$ $\zkz$-manifold follows from Example \ref{norBunZkz}.
\begin{lemma} \label{stIsoNor}
Let $(Q,P)$ be a $\zkz$-manifold and $N_1$ and $N_2$ be two $\zkz$-normal bundles for $(Q,P)$.  Then there exist trivial $\zkz$-bundles $(E_1,F_1)$ and $(E_2,F_2)$ such that 
$$N_1 \oplus (E_1,F_1) \cong N_2 \oplus (E_2,F_2)$$
\end{lemma}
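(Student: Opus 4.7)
The plan is to reduce the statement to the usual ``Eilenberg swindle style'' cancellation trick for complementary bundles, carried out internally to the category of $\zkz$-vector bundles. Write $T:=(TQ,T(P\times(0,1])|_P)$ for the $\zkz$-bundle with respect to which a $\zkz$-normal bundle is defined. By hypothesis, there exist $\zkz$-trivializations
\begin{equation*}
\tau_1\colon N_1\oplus T \xrightarrow{\;\cong\;}(E_2,F_2),\qquad \tau_2\colon N_2\oplus T \xrightarrow{\;\cong\;}(E_1,F_1),
\end{equation*}
where $(E_1,F_1)$ and $(E_2,F_2)$ are $\zkz$-trivial, and the trivializations are compatible with the boundary identifications (this is part of the definition of a $\zkz$-normal bundle).

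The key step is then the chain of canonical $\zkz$-bundle isomorphisms
\begin{equation*}
N_1\oplus(E_1,F_1)\;\cong\;N_1\oplus N_2\oplus T\;\cong\;N_2\oplus N_1\oplus T\;\cong\;N_2\oplus(E_2,F_2),
\end{equation*}
obtained by applying $\mathrm{id}_{N_1}\oplus\tau_2^{-1}$, swapping the $N_1$ and $N_2$ summands, and then applying $\mathrm{id}_{N_2}\oplus\tau_1$. Each of these moves is a well-defined $\zkz$-isomorphism: direct sum of $\zkz$-bundles is commutative and associative up to a canonical $\zkz$-isomorphism (the swap and reassociation isomorphisms on $Q$ and on $P$ agree under the boundary identification $E|_{\partial Q}\cong kF$), and $\tau_1,\tau_2$ are $\zkz$-isomorphisms by assumption.

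I expect the only nontrivial verification is the compatibility of the swap/reassociation isomorphism with the $\zkz$-structure, i.e.\ that the chosen identifications $(N_i\oplus T)|_{\partial Q}\cong k(N_i|_P\oplus T|_P)$ match up after swapping the summands $N_1$ and $N_2$. This follows because the structural isomorphisms of each individual $\zkz$-bundle are part of the data, and the swap map of vector bundles is natural, hence commutes with restriction to the boundary. Once this is in hand, setting the trivial $\zkz$-bundles in the statement of the lemma equal to $(E_1,F_1):=\tau_2(N_2\oplus T)$ and $(E_2,F_2):=\tau_1(N_1\oplus T)$ completes the argument. No appeal to bordism or vector bundle modification is needed; the lemma is purely a stable cancellation statement within the category of $\zkz$-vector bundles over $(Q,P)$.
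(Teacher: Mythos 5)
Your proof is correct and takes essentially the same route as the paper: both arguments set $(E_1,F_1)=N_2\oplus T$ and $(E_2,F_2)=N_1\oplus T$ (with $T=(TQ,T(P\times(0,1])|_P)$), note these are $\zkz$-trivial by the definition of a $\zkz$-normal bundle, and conclude from commutativity of the direct sum that $N_1\oplus(E_1,F_1)\cong N_1\oplus N_2\oplus T\cong N_2\oplus(E_2,F_2)$. The compatibility of the swap isomorphism with the $\zkz$-structure, which you verify explicitly, is left implicit in the paper's one-line computation.
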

\begin{proof}
Let $(E_2,F_2)=N_1 \oplus (TQ,T(P\times (0,1])|_P)$ and $(E_1,F_1)=N_2 \oplus (TQ,T(P\times (0,1])|_P)$.  By the definition of $\zkz$-normal bundle (i.e., Definition \ref{zkzNorBundle}), $(E_1,F_1)$ and $(E_2,F_2)$ are both $\zkz$-trivial.  Moreover, 
$$N_1 \oplus (E_1,F_1) \cong  N_1 \oplus N_2 \oplus (TQ,T(P\times (0,1])|_P) \cong N_2 \oplus (E_2,F_2)$$
\end{proof}
\begin{define}
A $\zkz$-cycle, $((Q,P),\varepsilon,f)$, is said to normally bound if there exists an (even rank) $\zkz$-normal bundle, $N$, over $(Q,P)$, such that the $\zkz$-cycle, $((Q,P),\varepsilon,f)^{N}$, is a boundary.  Two $\zkz$-cycles are normally bordant if their difference normally bounds.
\end{define}
\begin{prop}
Normal bordism (denoted $\sim_{nor}$) defines an equivalence relation on $\zkz$-cycles.
\end{prop}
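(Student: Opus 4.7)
The plan is to check the three properties of an equivalence relation in turn. Reflexivity and symmetry are essentially formal; the real work is transitivity, where one must stitch two normal bordisms together to produce a single one.

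For reflexivity, given a $\zkz$-cycle $c$, the disjoint union $c \dot{\cup} (-c)$ bounds via the cylinder $c\times[0,1]$. Choosing any even-rank $\zkz$-normal bundle $N$ for the underlying $\zkz$-manifold of $c$ (which exists by Example \ref{norBunZkz}, after stabilizing by a trivial line bundle if the parity is wrong) and pulling it back through the projection $c\times[0,1]\to c$, the vector bundle modification of the cylinder witnesses that $(c\dot{\cup}(-c))^{N\dot{\cup} N}$ is a boundary. Symmetry follows immediately: a bordism with reversed orientation witnessing $(c_1\dot{\cup}(-c_2))^N$ bounding also witnesses that $(c_2\dot{\cup}(-c_1))^N$ bounds.

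The heart of the argument is transitivity. Suppose $c_1 \sim_{nor} c_2$ via $N=N_1\dot{\cup} N_2$ over the $\zkz$-manifold underlying $c_1\dot{\cup}(-c_2)$, and $c_2\sim_{nor} c_3$ via $M=M_2\dot{\cup} M_3$ over that underlying $c_2\dot{\cup}(-c_3)$. The idea is to further modify each of the two witnessing boundary cycles by the other's $c_2$-normal bundle, then glue along a cylinder on $c_2$. Applying Lemma \ref{RavLemmaZkz} to the first bordism, further modification by the pullback of $\mathbf{1}\dot{\cup} M_2$ produces a bordism showing that $(c_1\dot{\cup}(-c_2))^{(N_1\oplus \mathbf{1})\dot{\cup}(N_2\oplus M_2)}$ is a boundary. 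The analogous construction applied to the second bordism shows that $(c_2\dot{\cup}(-c_3))^{(M_2\oplus N_2)\dot{\cup}(M_3\oplus \mathbf{1})}$ is a boundary.

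Taking the disjoint union of these two boundaries, the two middle pieces $(-c_2)^{N_2\oplus M_2}$ and $c_2^{M_2\oplus N_2}$ form a cycle and its opposite (via the canonical swap $N_2\oplus M_2\cong M_2\oplus N_2$), whose disjoint union is itself a boundary via the cylinder over $c_2^{N_2\oplus M_2}$. Gluing the three bordisms along this cylinder yields a single $\zkz$-bordism showing that $c_1^{N_1\oplus \mathbf{1}}\dot{\cup}(-c_3)^{M_3\oplus \mathbf{1}}$ is a boundary, that is, $c_1\dot{\cup}(-c_3)$ normally bounds via $(N_1\oplus \mathbf{1})\dot{\cup}(M_3\oplus \mathbf{1})$. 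I expect the main obstacle to be the careful bookkeeping of the ${\rm spin^c}$-structures and $\zkz$-identifications across the swap and the cylinder gluing, especially verifying that the pieces over the boundary $P$-manifolds match under Remark \ref{zkzbound}; ensuring the modified bundles have even rank on every component is precisely why the trivial-line-bundle stabilization is inserted above.
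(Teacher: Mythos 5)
Your reflexivity and symmetry arguments are fine, and the overall shape of your transitivity argument (arrange for the two vector bundle modifications of the middle cycle $c_2$ to agree, then concatenate along a cylinder) is the right one. The gap is at the step where you claim that ``further modification by the pullback of $\mathbf{1}\dot{\cup} M_2$ produces a bordism showing that $(c_1\dot{\cup}(-c_2))^{(N_1\oplus \mathbf{1})\dot{\cup}(N_2\oplus M_2)}$ is a boundary.'' Lemma \ref{RavLemmaZkz} does give that this cycle is \emph{bordant} to $\bigl((c_1\dot{\cup}(-c_2))^{N}\bigr)^{p^*(\mathbf{1}\dot{\cup} M_2)}$, and the inner cycle is a boundary of some $((\bar{W},\bar{Z}),\bar{\nu},\bar{g})$. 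But the vector bundle modification of a boundary is again a boundary only when the modifying bundle extends over the bounding $\zkz$-manifold; $p^*(\mathbf{1}\dot{\cup} M_2)$ lives only on $\partial\bar{W}$, and $M_2$ is an arbitrary $\zkz$-normal bundle for $(Q_2,P_2)$, so there is no reason for it to extend over $\bar{W}$. Knowing merely that the modified cycle is $\sim$-equivalent to a boundary does not help: the entire purpose of introducing normal bordism is that $\sim$-triviality is not known (at this stage) to imply bounding, so invoking that would be circular. The same objection applies to your second claimed boundary, where $N_2$ would have to extend over the second bordism.

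The paper's proof avoids this by never transporting a nontrivial bundle across a bordism. One only further modifies the two given bordisms by \emph{trivial} $\zkz$-bundles, which extend automatically, and one invokes Lemma \ref{stIsoNor} (any two $\zkz$-normal bundles for $(Q_2,P_2)$ are stably isomorphic) to produce trivial bundles $\epsilon$ and $\epsilon'$ with $N_2\oplus\epsilon\cong M_2\oplus\epsilon'$; after padding the outer bundles by trivial summands of matching ranks, the two modifications of the middle cycle coincide, Lemma \ref{RavLemmaZkz} collapses the iterated modifications, and the bordisms concatenate. Your argument is repaired by exactly this substitution: replace $\mathbf{1}\dot{\cup}M_2$ and $N_2\dot{\cup}\mathbf{1}$ by the trivial bundles supplied by Lemma \ref{stIsoNor}; your cylinder gluing and the parity bookkeeping then go through, and the resulting modifying bundles are still $\zkz$-normal bundles since adding a trivial summand preserves that property.
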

\begin{proof}
That $\sim_{nor}$ is reflexive follows from the existence of a normal bundle for any $\zkz$-manifold.  Symmetry is clear, so we need only show transitivity.  To this end, let $\{((Q_i,P_i),\varepsilon_i,f_i)\}_{i=0}^{2}$ be $\zkz$-cycles and $N_0$, $N_1$, $N_1^{\prime}$, and $N_2$ be $\zkz$-normal bundles, such that
\begin{eqnarray*}
((Q_0,P_0),\varepsilon_0,f_0)^{N_0} & \sim_{bor} & ((Q_1,P_1),\varepsilon_1,f_1)^{N_1} \\
((Q_1,P_1),\varepsilon_1,f_1)^{N_1^{\prime}} & \sim_{bor} & ((Q_2,P_2),\varepsilon_2,f_2)^{N_2}
\end{eqnarray*}
We now use the fact that $\zkz$-normal bundles are stably isomorphic (see Lemma \ref{stIsoNor} above).  Based on this result, there exist trivial $\zkz$-bundles, $\epsilon_1$ and $\epsilon_1^{\prime}$ (both over $(Q_1,P_1)$) such that $N_1\oplus \epsilon_1 \cong  N_1^{\prime}\oplus \epsilon_1^{\prime}$.  Let $\epsilon_0$ be the trivial $\zkz$-bundle over $(Q_0,P_0)$ of the same rank as $\epsilon_1$ and $\epsilon_2$ be the trivial $\zkz$-bundle over $(Q_2,P_2)$ of the same rank as $\epsilon_1^{\prime}$.  Since the vector bundle modification by trivial bundles extend across $\zkz$-bordisms, we have that 
\begin{eqnarray*}
(((Q_0,P_0),\varepsilon_0,f_0)^{N_0})^{p_0^*(\epsilon_0)} & \sim_{bor} & (((Q_1,P_1),\varepsilon_1,f_1)^{N_1})^{p_1^*(\epsilon_1)} \\
(((Q_1,P_1),\varepsilon_1,f_1)^{N_1^{\prime}})^{p_{1^{\prime}}^*(\epsilon_1^{\prime})} & \sim_{bor} & (((Q_2,P_2),\varepsilon_2,f_2)^{N_2})^{p_2^*(\epsilon_2)}
\end{eqnarray*}
Moreover, by applying Lemma \ref{RavLemmaZkz} (a number of times), we obtain
\begin{eqnarray*}
((Q_0,P_0),\varepsilon_0,f_0)^{N_0 \oplus \epsilon_0} & 
\sim_{bor} & ((Q_1,P_1),\varepsilon_1,f_1)^{N_1\oplus \epsilon_1} \\
& \sim_{bor} & ((Q_2,P_2),\varepsilon_2,f_2)^{N_2 \oplus \epsilon_2}
\end{eqnarray*}
The result now follows since $N_0 \oplus \epsilon_0$ and $N_2 \oplus \epsilon_2$ are both $\zkz$-normal bundles.
\end{proof}
We now show that the normal bordism equivalence relation is the same as the equivalence relation generated by disjoint union, bordism, and vector bundle modification (i.e., $\sim$).  It is clear that normal bordism implies equivalence with respect to the relation $\sim$.  The next two propositions show the converse.  
\begin{prop}
If $((Q,P),\varepsilon,f)$ is a $\zkz$-boundary, then it also $\zkz$-normally bounds.
\end{prop}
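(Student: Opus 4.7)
The plan is to produce the required normal bordism directly from the bounding data. Let $(\bar{Q}, \bar{P})$, $\bar{\varepsilon} \in K^0(\tilde{\bar{Q}})$, and $\Phi: (\bar{Q}, \bar{P}) \to X$ witness that $((Q,P),\varepsilon,f)$ is a $\zkz$-boundary in the sense of Definition \ref{zkzCycleBordism}. I will first construct an even-rank $\zkz$-normal bundle $\bar N'$ over $(\bar Q, \bar P)$, then verify that its restriction $N := \bar N'|_{(Q,P)}$ is a $\zkz$-normal bundle for $(Q,P)$, and finally observe that the $\zkz$-boundary of the vector bundle modification $((\bar Q, \bar P), \bar \varepsilon, \Phi)^{\bar N'}$ is precisely $((Q,P),\varepsilon,f)^N$.

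For the construction of $\bar N'$, I would mimic Example \ref{norBunZkz} for $\zkz$-manifolds with boundary: for $N$ large, choose compatible neat embeddings $(\bar Q, \bar P) \hookrightarrow (H^{2N}_k, \field{D}^{2N-1})$, producing an embedding normal $\zkz$-bundle $(\bar N_{\bar Q}, \bar N_{\bar P})$. Since $T H^{2N}_k$ and $T \field{D}^{2N-1}$ are trivial of ranks $2N$ and $2N-1$, the $\zkz$-vector bundle $\bar N' := (\bar N_{\bar Q} \oplus \underline{\field{R}}, \bar N_{\bar P} \oplus \underline{\field{R}})$ satisfies the property that $\bar N' \oplus (T\bar Q, T(\bar P \times (0,1])|_{\bar P})$ is $\zkz$-trivial; stabilize by a further trivial rank-one summand on both sides if needed so that the total rank is even.

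Next, using $T\bar Q|_Q \cong TQ \oplus \underline{\field{R}}$ (inward normal to $Q \subset \partial \bar Q$) and $T\bar P|_P \cong TP \oplus \underline{\field{R}}$, the restriction $N := \bar N'|_{(Q,P)}$ satisfies that $N \oplus (TQ, T(P \times (0,1])|_P)$ is $\zkz$-trivial; the rank match on the two sides and the $k$-fold boundary compatibility $N_Q|_{\partial Q} = k\, N_P$ are inherited from those of $\bar N'$ on $\partial \bar Q \supset k\bar P$. Hence $N$ is a $\zkz$-normal bundle for $(Q,P)$. Finally, because the sphere bundle and reduced spinor bundle constructions in Definition \ref{vbModzkz} are functorial under restriction to the $\zkz$-boundary, the vector bundle modification $((\bar Q, \bar P), \bar \varepsilon, \Phi)^{\bar N'}$ is a $\zkz$-cycle with boundary whose $\zkz$-boundary is exactly $((Q,P),\varepsilon,f)^N$. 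Thus $((Q,P),\varepsilon,f)$ normally bounds.

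The main obstacle is the rank and compatibility bookkeeping in the construction of $\bar N'$: one $\underline{\field{R}}$-summand on the $\bar P$-side is needed to absorb the $(0,1]$-shift appearing in the definition of a $\zkz$-normal bundle, while the matching $\underline{\field{R}}$-summand on the $\bar Q$-side both restores rank equality between the two sides and accommodates the extra normal direction at $Q \subset \partial \bar Q$. One must verify throughout that these stabilizations preserve the $k$-fold boundary compatibility intrinsic to the $\zkz$-vector bundle structure, and that the parity adjustment required to invoke vector bundle modification does not disturb any of the above.
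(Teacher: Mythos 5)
Your proposal is correct and follows essentially the same route as the paper: take a $\zkz$-normal bundle for the bounding $\zkz$-manifold with boundary, stabilize by a trivial rank-one summand to absorb the inward normal direction of $Q\subset\partial\bar{Q}$, check via $T\bar{Q}|_Q\cong TQ\oplus\mathbf{1}$ that the restriction is a $\zkz$-normal bundle for $(Q,P)$, and observe that the vector bundle modification of the bordism restricts on the boundary to the modification of the original cycle. The paper's version is terser (it cites Example \ref{norBunZkz} for existence of the normal bundle rather than re-running the embedding argument), but the key identification $TQ\oplus N|_Q\oplus\mathbf{1}\cong(T\bar{Q}\oplus N)|_Q$ is identical to yours.
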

\begin{proof}
Let $((W,Z),\nu,g)$ be a $\zkz$-bordism with boundary, $((Q,P),\varepsilon,f)$.  We must show that $((Q,P),\varepsilon,f)$ normally bounds.  To do so, fix a $\zkz$-normal bundle, $N$, for $(W,Z)$ and then consider $((W,Z),\nu,g)^{N \oplus {\bf 1}}$.  The boundary of this cycle is $$((Q,P),\varepsilon,f)^{N|_Q \oplus 1},$$ 
but we have that 
$$TQ\oplus N|_Q \oplus {\bf 1} \cong TW|_Q \oplus N|_Q \cong (TW \oplus N)|_Q $$
where this last bundle is $\zkz$-trivial by assumption.  Hence, $N|_Q \oplus {\bf 1}$ is a $\zkz$-normal bundle for $(Q,P)$ and so $((Q,P),\varepsilon,f)$ normally bounds.
\end{proof}
\begin{prop}
If $((Q,P),\varepsilon,f)$ is a $\zkz$-cycle and $(V,W)$ is a $\zkz$-vector bundle with even-dimensional fibers, then $((Q,P),\varepsilon,f)^{(V,W)}$ is $\zkz$-normally bordant to $((Q,P),\varepsilon,f)$.
\end{prop}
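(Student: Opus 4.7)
The plan is to produce explicit $\zkz$-normal bundles $N_1$ over $(Q,P)$ and $N_2$ over $(Z_Q,Z_P)$ together with a $\zkz$-bordism between $((Q,P),\varepsilon,f)^{N_1}$ and $\bigl(((Q,P),\varepsilon,f)^{(V,W)}\bigr)^{N_2}$; by the definition of normal bordism this is exactly what is required to show that $((Q,P),\varepsilon,f)$ and $((Q,P),\varepsilon,f)^{(V,W)}$ are $\zkz$-normally bordant.

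To build these bundles, first invoke Example \ref{norBunZkz} to choose a $\zkz$-normal bundle $N$ for $(Q,P)$, and choose a complementary $\zkz$-vector bundle $N_V$ for $(V,W)$, so that $(V,W) \oplus N_V$ is $\zkz$-trivial. After stabilizing by trivial $\zkz$-bundles we may assume $N$ and $N_V$ both have even rank. Set
\[
N_1 := (V,W) \oplus N_V \oplus N \oplus \mathbf{1}^2.
\]
Since $(V,W) \oplus N_V$ is trivial, $N_1$ is just $N$ together with a trivial summand, and therefore remains a $\zkz$-normal bundle for $(Q,P)$, of even rank. Applying Lemma \ref{RavLemmaZkz} to the decomposition $N_1 = (V,W) \oplus \bigl(N_V \oplus N \oplus \mathbf{1}^2\bigr)$ yields
\[
((Q,P),\varepsilon,f)^{N_1} \;\sim_{bor}\; \bigl(((Q,P),\varepsilon,f)^{(V,W)}\bigr)^{N_2},
\qquad N_2 := p^*\!\bigl(N_V \oplus N \oplus \mathbf{1}^2\bigr),
\]
where $p : (Z_Q,Z_P) \to (Q,P)$ is the sphere-bundle projection coming from the vector bundle modification by $(V,W)$.

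The heart of the argument is to verify that $N_2$ is itself a $\zkz$-normal bundle for $(Z_Q,Z_P)$. Choose a metric splitting to write $T(Z_Q,Z_P) \cong (\mathrm{Vert}) \oplus p^*T(Q,P)$, where $(\mathrm{Vert})$ denotes the vertical tangent $\zkz$-bundle, and use the standard stable identification for the sphere bundle of $(V,W) \oplus \mathbf{1}$, namely $(\mathrm{Vert}) \oplus \mathbf{1} \cong p^*\!\bigl((V,W) \oplus \mathbf{1}\bigr)$. Together with the defining trivializations $T(Q,P) \oplus N \cong \text{trivial}$ and $(V,W) \oplus N_V \cong \text{trivial}$, these identifications combine to give
\[
T(Z_Q,Z_P) \oplus N_2 \cong p^*\!\bigl(T(Q,P) \oplus N\bigr) \oplus p^*\!\bigl((V,W) \oplus N_V\bigr) \oplus \mathbf{1}^2,
\]
which is trivial. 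Hence $N_2$ is a $\zkz$-normal bundle for $(Z_Q,Z_P)$, completing the construction.

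The main obstacle is the parity bookkeeping: vector bundle modification only applies to even-rank bundles, which is why one must add $\mathbf{1}^2$ (not a single $\mathbf{1}$) and stabilize $N$ and $N_V$ to be of even rank at the outset; the extra $\mathbf{1}$ factor produced by the sphere-bundle tangent identification is then precisely absorbed. A secondary point is ensuring that every step respects the boundary gluing inherent in the $\zkz$-structure; this is automatic since pullbacks, direct sums, metric splittings, and the vertical-tangent identification are all natural operations on $\zkz$-bundles, but it should be noted when writing the argument in full.
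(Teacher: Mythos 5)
Your proof is correct and follows essentially the same route as the paper's: choose a complement $V^c$ of $(V,W)$ and a $\zkz$-normal bundle $N$ of $(Q,P)$, check that the pullback $p^*(V^c\oplus N\oplus \mathbf{1}\cdots)$ is a $\zkz$-normal bundle for the sphere bundle via $T(Q^V)\oplus\mathbf{1}\cong p^*(TQ\oplus V)\oplus\mathbf{1}$, and collapse the iterated modification with Lemma \ref{RavLemmaZkz} into a single modification of $((Q,P),\varepsilon,f)$ by a normal bundle. The only difference is that you stabilize to even rank and add $\mathbf{1}^2$ where the paper adds a single $\mathbf{1}$ without comment; your parity bookkeeping is a small but legitimate refinement, not a different argument.
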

\begin{proof}
We begin by constructing a normal bundle for $(Q^V,P^W)$.  To do so, we let $p:Q^V \rightarrow Q$ be the natural projection and note that 
$$T(Q^V) \oplus {\bf 1} \cong p^*(TQ \oplus V)\oplus {\bf 1}$$
Let $N$ be a $\zkz$-normal bundle for $(Q,P)$ and $V^c$ be a complement to $V$ (i.e., $V\oplus V^c$ is trivial).  Note that $V^c$ can be chosen to be a $\zkz$-bundle.  To summarize, $TQ \oplus N \cong \epsilon_Q$ and $V\oplus V^c = \epsilon_V$ where $\epsilon_Q$ and $\epsilon_V$ are trivial vector bundles. \par
Next, we consider 
\begin{eqnarray*}
T(Q^V) \oplus p^*(V^c \oplus N \oplus {\bf 1} ) & \cong & T(Q^V) \oplus {\bf 1} \oplus p^*(V^c \oplus N) \\
& \cong & p^*(TQ \oplus V)\oplus {\bf 1} \oplus p^*(V^c \oplus N) \\
& \cong & p^*(TQ \oplus N) \oplus p^*(V\oplus V^c) \oplus {\bf 1} \\
\end{eqnarray*}
This last bundle is trivial and hence $p^*(V^c \oplus N \oplus {\bf 1})$ is a normal bundle for $Q^V$. \par
Using Lemma \ref{RavLemmaZkz}, we have that 
\begin{eqnarray*}
(((Q,P),\varepsilon,f)^{(V,W)})^{p^*(V^c \oplus N \oplus {\bf 1})} & \sim_{bor} & ((Q,P),\varepsilon,f)^{(V,W) \oplus V^c \oplus N \oplus {\bf 1}} \\
& \sim_{bor} & ((Q,P),\varepsilon,f)^{\epsilon_V \oplus N \oplus {\bf 1}}
\end{eqnarray*}
The last modification is by the bundle $N \oplus \epsilon_V \oplus {\bf 1}$, which is a $\zkz$ normal bundle for $(Q,P)$; hence these cycles are normally bordant.  
\end{proof}
\begin{cor}
Let $X$ be a finite CW-complex.  Then a $\zkz$-cycle over $X$ represents the zero element in $K_*(X;\zkz)$ if and only if it $\zkz$-normally bounds. \label{norBorzkz}
\end{cor}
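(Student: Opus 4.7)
The plan is to deduce this corollary directly from the two preceding propositions together with the fact, already noted in the discussion at the start of Section 2.2, that in the K-theory-class version of the model the equivalence relation $\sim$ is generated by only two moves: $\zkz$-bordism and $\zkz$-vector bundle modification. The two preceding propositions show precisely that each of these two generating moves is a special case of normal bordism, so once everything is assembled the corollary is essentially a bookkeeping statement.

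\textbf{The ``if'' direction} is the short one. Suppose $((Q,P),\varepsilon,f)$ normally bounds, i.e.\ there is an even-rank $\zkz$-normal bundle $N$ for $(Q,P)$ such that $((Q,P),\varepsilon,f)^{N}$ is a $\zkz$-boundary. A boundary represents the zero class in $K_*(X;\zkz)$, and vector bundle modification is one of the generating relations in Definition \ref{zkzModel}, so $((Q,P),\varepsilon,f)\sim ((Q,P),\varepsilon,f)^{N}$; hence the original cycle represents zero.

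\textbf{The ``only if'' direction} is the one that uses the real content. Assume $((Q,P),\varepsilon,f)$ represents zero in $K_*(X;\zkz)$. Then it is related to the empty cycle by a finite chain of elementary moves, each move being either a $\zkz$-bordism or a $\zkz$-vector bundle modification. My plan is to show each such move is a normal bordism, after which transitivity of $\sim_{nor}$ (already established) yields $((Q,P),\varepsilon,f)\sim_{nor}(\emptyset,\emptyset,\emptyset)$, which means the cycle normally bounds. For a bordism move $A\sim_{bor} B$, by definition $A\,\dot\cup\,(-B)$ is a $\zkz$-boundary, so the first proposition above supplies a normal bundle witnessing $A\sim_{nor}B$. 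For a vector bundle modification move, the second proposition is exactly the required statement.

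The only point requiring any care is that the listed generators for $\sim$ omit the disjoint union/direct sum relation of Definition \ref{zkzModel}(1); but in the K-theory-class version of the model this relation is already a consequence of bordism and vector bundle modification (the $\zkz$-analogue of Proposition 4.2.3 of \cite{Rav}, noted earlier). There is no genuine obstacle here beyond this verification, since all the substantive work (existence of $\zkz$-normal bundles, stable uniqueness via Lemma \ref{stIsoNor}, and the two propositions just proved) has been done.
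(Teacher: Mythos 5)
Your proposal is correct and follows essentially the same route as the paper, which presents the corollary as an immediate consequence of the two preceding propositions together with the observation that, in the $K$-theory-class model, the relation $\sim$ is generated by bordism and vector bundle modification alone (the disjoint union/direct sum relation being subsumed, as you note). Your write-up simply makes explicit the chain-of-elementary-moves argument and the appeal to transitivity of $\sim_{nor}$ that the paper leaves implicit.
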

\subsubsection{Bockstein exact sequence}
We now construct and prove exactness of the Bockstein sequence for our model.  The reader should compare this sequence with both the one for bordism groups with coefficients in $\zkz$ found in \cite{MS} and the one for analytic K-homology with coefficients in $\zkz$ found in \cite{Sch}.  We briefly discuss the relationship between the various Bockstein sequences after proving the theorem.
\begin{theorem} \label{Bockstein}
Let $X$ be a finite CW-complex.  Then the following sequence is exact. \newline
\begin{center}
$\begin{CD}
K_0(X) @>k>> K_0(X) @>r>> K_0(X;\zkz) \\
@AA\delta A @. @VV\delta V \\
K_1(X;\zkz) @<r<<  K_1(X) @<k<< K_1(X) 
\end{CD}$
\end{center}
where the maps are 
\begin{enumerate}
\item $k : K_*(X) \rightarrow K_*(X)$ is given by multiplication by $k$. 
\item $r: K_*(X) \rightarrow K_*(X;\zkz)$ takes a cycle $(M,E,f)$ to $((M,\emptyset),(E, \emptyset),f)$.  
\item $\delta : K_*(X;\zkz) \rightarrow K_{*+1}(X)$ takes a cycle $((Q,P),(E,F),f)$ to $(P,F,f)$.   
\end{enumerate}
\end{theorem}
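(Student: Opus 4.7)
The plan is to verify well-definedness of $r$, $k$, and $\delta$, then check that each composition of consecutive maps vanishes, and finally establish the three reverse inclusions $\ker \subset \operatorname{Im}$ using the normal bordism characterization of trivial cycles (Theorem \ref{norBor} and Corollary \ref{norBorzkz}) as the main technical tool. Well-definedness of $r$ is automatic since every Baum-Douglas relation is the $P=\emptyset$ specialization of a $\zkz$-relation. For $\delta$, restricting a $\zkz$-bordism $(\bar{Q},\bar{P})$ to the $\bar{P}$-factor yields a Baum-Douglas bordism of $P$, and by Remark \ref{remZkzVBM} a $\zkz$-vector bundle modification of $((Q,P),(E,F),f)$ by $(W,V)$ restricts on $P$ to the ordinary Baum-Douglas modification of $(P,F,f|_P)$ by $V$; both $r$ and $\delta$ respect the parity grading.

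The three vanishing compositions are short. For $r\circ k = 0$: Example \ref{manBou}(3) exhibits $(kM\times[0,1],-M)$ as a $\zkz$-manifold with boundary whose $\zkz$-boundary is $(kM,\emptyset)$, and pulling the bundle and map back along the obvious projections gives a $\zkz$-null-bordism of $r(kM,kE,kf)$. The composition $\delta\circ r=0$ is immediate since $r$-cycles have empty $P$. For $k\circ\delta = 0$: the manifold $Q$ itself, with bundle $E$ and map $f$, is a Baum-Douglas null-bordism of its boundary $(kP,kF,kf|_P) = k\cdot(P,F,f|_P)$.

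For the reverse inclusions I would work throughout in the K-theory-class version of both models, as set up at the start of Section 2.2. For $\ker(r)\subset\operatorname{Im}(k)$: if $r[(M,\varepsilon,f)]=0$, Corollary \ref{norBorzkz} provides a $\zkz$-normal bundle $N$ and a $\zkz$-bordism $(\bar{Q},\bar{P},\bar{\nu},\bar{g})$ with $\zkz$-boundary $(M^N,\emptyset)$. Since $\partial\bar{P}=\emptyset$, $\bar{P}$ is closed and the underlying manifold boundary is $\partial\bar{Q} = M^N \sqcup k\bar{P}$; regarding $\bar{Q}$ as an ordinary Baum-Douglas bordism then shows $[(M,\varepsilon,f)] = [(M^N,\varepsilon^N,f\circ\pi)] = \pm k[(\bar{P},\bar{\nu}|_{\bar{P}},\bar{g}|_{\bar{P}})] \in \operatorname{Im}(k)$. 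For $\ker(k)\subset\operatorname{Im}(\delta)$: if $k[(P,\varepsilon,g)]=0$, pick a normal bundle $N$ for $P$ and apply Theorem \ref{norBor} to $(kP,k\varepsilon,kg)$ with $N$ chosen identically on each component, obtaining a Baum-Douglas null-bordism $\bar{Q}$ of $(kP,k\varepsilon,kg)^N = k\cdot(P^N,\varepsilon^N,g\circ\pi)$. Then $(\bar{Q},P^N)$ is a closed $\zkz$-manifold, and the resulting $\zkz$-cycle has $\delta$-image $[(P^N,\varepsilon^N,g\circ\pi)] = [(P,\varepsilon,g)]$.

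The main obstacle is exactness at $K_*(X;\zkz)$, i.e., $\ker(\delta)\subset\operatorname{Im}(r)$. Given $[((Q,P),\varepsilon,f)]$ with $[(P,\varepsilon|_P,f|_P)]=0$ in $K_{*+1}(X)$, Theorem \ref{norBor} yields a normal bundle $N$ for $P$ and a Baum-Douglas null-bordism $\bar{P}$ of $(P,\varepsilon|_P,f|_P)^N$. The plan is to extend $N$ to a $\zkz$-normal bundle $(N',N)$ on $(Q,P)$, perform the $\zkz$-vector bundle modification of $((Q,P),\varepsilon,f)$ by $(N',N)$, and cap off each of the $k$ boundary copies of $P^N \subset \partial Q^{N'}$ with a copy of $\bar{P}$ to obtain a closed ${\rm spin^c}$-manifold $M$ carrying an extended K-theory class $\tilde{\varepsilon}$ and map $\tilde{f}$. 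One must then exhibit a $\zkz$-bordism from $((Q^{N'},P^N),\varepsilon^{N'},f\circ\pi)$ to $((M,\emptyset),\tilde{\varepsilon},\tilde{f})$, essentially the mapping cylinder of the capping procedure. The delicate point is the gluing of K-theory classes along the codimension-one interface $P^N$, which is legitimate in the K-theory-class model but would not be available for honest vector bundles; this is precisely why the K-theory-class model was introduced at the start of Section 2.2.
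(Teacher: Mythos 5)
Your overall strategy coincides with the paper's: reduce to the $K$-theory-class model, verify well-definedness, check the three vanishing compositions exactly as you do (Example~\ref{manBou}(3) for $r\circ k$, emptiness of $P$ for $\delta\circ r$, and $Q$ itself as a null-bordism of $kP$ for $k\circ\delta$), and then use Theorem~\ref{norBor} and Corollary~\ref{norBorzkz} for the three reverse inclusions, with the capping construction $M=Q\cup_{kP}k\bar{P}$ and a cylinder-type $\zkz$-bordism for exactness at $K_*(X;\zkz)$. You have also correctly identified why the $K$-theory-class model is needed: the class on $M$ is produced by Mayer--Vietoris exactness of $K^0(\tilde{M})\rightarrow K^0(\tilde{Q})\oplus K^0(\bar{P})\rightarrow K^0(\partial\bar{P})$, which has no analogue for honest vector bundles.

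There is, however, one step in your treatment of $\ker(\delta)\subseteq\operatorname{Im}(r)$ that fails as stated: you propose to ``extend $N$ to a $\zkz$-normal bundle $(N',N)$ on $(Q,P)$,'' where $N$ is the normal bundle of $P$ supplied by Theorem~\ref{norBor}. Such an extension does not exist in general --- a normal bundle chosen on $P$ to make $(P,\varepsilon_P,f|_P)^N$ bound need not be the restriction to $P$ of any $\zkz$-normal bundle of $(Q,P)$. The repair is Lemma~\ref{stIsoNor}: since any two $\zkz$-normal bundles of $P$ are stably isomorphic, one can find a $\zkz$-normal bundle $(N_Q,N_P)$ of $(Q,P)$ and a trivial bundle $V$ with $N_P\cong N\oplus V$; then $\delta\bigl(((Q,P),\varepsilon,f)^{(N_Q,N_P)}\bigr)=(P,\varepsilon_P,f)^{N\oplus V}\sim_{bor}\bigl((P,\varepsilon_P,f)^N\bigr)^{p^*(V)}$, which still bounds because the further modification by a trivial bundle extends over the null-bordism $\bar{P}$. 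With that substitution your argument goes through and agrees with the paper's.
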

\begin{proof} 
We leave it to the reader to show that the stated result will follow from the corresponding result for the model using cycles of the form $((Q,P),\varepsilon,f)$.  The maps in this case are given by:
\begin{enumerate}
\item $k : K_*(X) \rightarrow K_*(X)$ is given by multiplication by $k$. 
\item $r: K_*(X) \rightarrow K_*(X;\zkz)$ takes a cycle $(M,\varepsilon,f)$ to $((M,\emptyset),\varepsilon,f)$.  
\item $\delta : K_*(X;\zkz) \rightarrow K_{*+1}(X)$ takes a cycle $((Q,P),\varepsilon,f)$ to $(P,\varepsilon_P,f)$.  Where, if $\varepsilon=[(E,F)]-[(E^{\prime},F^{\prime})]$, then $\varepsilon_P:=[F]-[F^{\prime}] \in K^0(P)$; $\varepsilon_P$ (as a class in K-theory) depends only on the class $\varepsilon$.   
\end{enumerate}
Our first goal is to show that these maps are well-defined.  It is clear from the fact that $K_*(X)$ is an abelian group that multiplication by $k$ is well-defined.  That the map $r$ is well-defined follows (essentially) from Item 2 of Example \ref{manBou} (for the bordism relation) and Remark \ref{remZkzVBM} (for vector bundle modification).  Finally, it follows from Remark \ref{zkzbound} Equation \ref{borEq2}, that the map $$\delta : K_*(X;\zkz)\rightarrow K_{*+1}(X)$$ respects the bordism relation.  The case of vector bundle modification is again covered by Remark \ref{remZkzVBM}. 
\par
We now prove that the composition of any two maps in the sequence is zero.  This uses the same ideas as the ones used in \cite{MS} to prove the exactness of the Bockstein sequence for bordism with coefficients in $\zkz$. 
\par
Firstly, 
\begin{equation*}
(r \circ k) (M,\varepsilon,f) = ((kM, \emptyset), (k\varepsilon, \emptyset), kf) 
\end{equation*}
That this cycle is trivial in $K_*(X;\zkz)$ follows from Item 3 of Example \ref{manBou}. The reader should note that one must also keep track of the K-theory class and continuous map involved.  \par
It is clear that $(\delta \circ r)=0$ since the image of cycles in $K_*(X)$ under $r$ have empty boundary.  We are left to show that $(k \circ \delta)=0$.  Consider 
\begin{equation*}
(k \circ \delta)((Q,P),[(E,F)]-[(E^{\prime},F^{\prime})],f)= (k P, k ([F]-[F^{\prime}]), k f|_P)
\end{equation*}  
We must show that this cycle is trivial in $K_*(X)$.  To do so, note that the Baum-Douglas cycle (with boundary), $(Q,[E]-[E^{\prime}],f)$, has boundary given by $(k P, k ([F]-[F^{\prime}]), k f|_P)$.  The bordism relation in the Baum-Douglas model implies that the cycle, $(k P, k ([F]-[F^{\prime}]), k f|_P)$, is trivial and hence $(k \circ \delta)=0$ . \par
To complete the proof of exactness, we must show that 
\begin{equation*}
{\rm ker}(k) \subseteq  {\rm im}(\delta),\ 
{\rm ker}(\delta)  \subseteq  {\rm im}(r), \
{\rm ker}(r) \subseteq  {\rm im}(k)
\end{equation*}
It is worth noting that only here do we need the notion of normal bordism.  Again, this is analogous to the case of the long term exact sequence in relative $K$-homology (see Section 4.6 of \cite{Rav}). \par
To begin, suppose that $(M,\varepsilon,f)$ is a Baum-Douglas cycle which is in ${\rm ker}(k)$.  We must construct a $\zkz$-cycle which maps to $[(M,\varepsilon,f)]$.  Corollary \ref{norBor} implies that there exists a normal bundle modification (we denote the normal bundle by $N$) such that $(kM,k\varepsilon,kf))^N$ is a boundary.  Moreover, we can choose $N$ so that $N|_M$ is well-defined and so that $k (M,\varepsilon,f)^{N|_M}$ is a boundary.  We now denote $N|_M$ simply as $N$.  By definition, there exists a manifold with boundary, $Q$, K-theory class in $K^0(Q)$, $\nu$, and map, $g$, such that 
\begin{eqnarray*}
\partial Q & = &  kM^N \\
\nu|_{\partial Q} & = & k \varepsilon^N \\
g|_{\partial Q} & = & kf
\end{eqnarray*}
Moreover, the class $\nu$ is in the image of the map on K-theory induced by the inclusion of $C(\tilde{Q}) \hookrightarrow C(Q)$.  Denote a preimage of $\nu$ by $\nu^{\prime}$.  We have therefore constructed a $\zkz$-manifold, $(Q,M^N)$, a class in $K^0(\tilde{Q})$, $\nu^{\prime}$ and a continuous map, $g:(Q,M^N)\rightarrow X$.  That is, $((Q,M^N),\nu^{\prime},g)$ is a $\zkz$-cycle.  Moreover, 
\begin{equation*}
\delta([((Q,M^N),\nu^{\prime},g)])=[(M,\varepsilon,f)^N ]=[(M,\varepsilon,f)]
\end{equation*}
\par
Next, suppose that $(M,\varepsilon,f)$ is a Baum-Douglas cycle such that $[(M,\varepsilon,f)]\in {\rm ker}(r)$.  This implies that $[((M,\emptyset),\varepsilon,f)]=0$ and, by Corollary \ref{norBorzkz}, that there exists a normal bundle modification (we denote the normal bundle by $N$) such that $((M,\emptyset),\varepsilon,f)^N$ is the boundary in the $\zkz$ sense.  That is, we have a $\zkz$-manifold with boundary, $(W,P)$, a K-theory class, $\nu$, in $K^0(\tilde{W})$, and a continuous map, $g$, such that 
\begin{eqnarray*}
\partial W & = &  M^N \dot{\cup} kP  \\
(\nu)|_{\partial W - (kP)} & = & \varepsilon^N   \\
g|_{\partial W} & = & f \dot{\cup} k(g|_P)
\end{eqnarray*}
Let $i^*$ denote the map on K-theory induced from the inclusion $C(\tilde{W})\hookrightarrow C(W)$.  Then $(W,i^*(\nu),g)$ defines a bordism between the Baum-Douglas cycles $(M^N,\varepsilon^N,f)$ and $(kP,i^*(\nu)|_{kP},g|_{kP})$.  Hence $[(M,\varepsilon,f)]$ is in the image of the map $k$.  \par
Finally we must show that if $((Q,P),\varepsilon,f)$ is a $\zkz$-cycle such that 
\begin{equation*}
\delta([((Q,P),\varepsilon,f)])=[(P,\varepsilon_P,f)]=0
\end{equation*}
then $[((Q,P),\varepsilon,f)]$ is in the image of $r$.  \par
To begin, we reduce to the case when $(P,\varepsilon_P,f|_P)$ is a boundary.  Theorem \ref{norBor} implies that there exists normal bundle modification (we again denote the bundle by $N$) such that $(P,\varepsilon_P,f)^N$ is a boundary.  We denote this bordism by $(W,\tilde{\varepsilon},\tilde{f})$; hence, $\partial W =P^N$.  Extending the normal bundle, $N$, from the manifold, $P$, to all of $Q$ is, in general, not possible.  However, since all normal bundles of $P$ are stably isomorphic (see Lemma \ref{stIsoNor}), we have a normal $\zkz$-vector bundle, $(N_Q,N_P)$, over $(Q,P)$ and a trivial bundle, $V$, such that $N_P=N\oplus V$.  Then
\begin{eqnarray*}
\delta(((Q,P),\varepsilon,f)^{(N_Q,N_P)}) & = & (P,\varepsilon_P,f)^{N_P} \\
& = & (P,\varepsilon_P,f)^{N\oplus V} \\
& \sim_{bor} & ((P,\varepsilon_P,f)^N)^{p^*(V)} \\
\end{eqnarray*}  
The cycle $((P,\varepsilon_P,f)^N)^{p^*(V)}$ is a boundary.   \par
Hence, without loss of generality, $(P,\varepsilon_P,f|_P)$ is a boundary.  Denote by $(\bar{P},\tilde{\varepsilon},g)$ the triple where $\bar{P}$ is a manifold with boundary with, $\partial \bar{P}=P$, $\tilde{\varepsilon}$ is a K-theory class in $K^0(\bar{P})$ with $\tilde{\varepsilon}|_{\partial \bar{P}}=\varepsilon$, and $g$ is a continuous function with $g|_{\partial \bar{P}}=f|_P$.  Form the manifold (without boundary) $M=Q \cup_{kP} k\bar{P}$ and also form the (singular space) $\tilde{M}=\tilde{Q}\cup_P \bar{P}$.  The fact that 
$$K^0(\tilde{M}) \rightarrow K^0(\tilde{Q})\oplus K^0(\bar{P}) \rightarrow K^0(\partial \bar{P})$$  
is exact in the middle implies that there exists $\nu \in K^0(\tilde{M})$ such that $\nu|_{\tilde{Q}}=\varepsilon$ and $\nu|_{\bar{P}}=\tilde{\varepsilon}$.  Moreover, it is clear that the continuous functions, $f$ on $Q$, and, $g$ on $\bar{P}$, are compatible on $\partial Q$ and $\partial (k\bar{P})$. Let $h$ denote the continuous map $f \cup_{kP} kg$.  Also let $i^*$ denote the map on K-theory induced from the map $i:C(\tilde{M}) \rightarrow C(M)$.  Then $(M,i^*(\nu),h)$ forms a Baum-Douglas cycle.  \par
We now show that $r(M,i^*(\nu),h)\sim((Q,P),\varepsilon,f)$ in $K_*(X;\zkz)$ by constructing a $\zkz$-bordism between them.  To this end, consider the Baum-Douglas bordism between $(M,i^*(\nu),h)$ and its opposite, $-(M,i^*(\nu),h)$.  Denoting this bordism by $(\hat{M},\tilde{\nu},\tilde{h})$, we have that 
\begin{eqnarray*}
\partial \hat{M} &  = &  M \dot{\cup} -M \\   
& = & M \dot{\cup} ( Q \cup_{\partial Q} k\bar{P})
\end{eqnarray*}
We create a $\zkz$-manifold with boundary from $\hat{M}$ by taking the $k$ embeddings of $\bar{P}$ into $\partial \hat{M}$.  The resulting $\zkz$-manifold, $(\hat{M},\bar{P})$, has $\zkz$-boundary, $M\dot{\cup} Q$.  Moreover, the K-theory class, $\tilde{\nu}$, and map, $\tilde{h}$, respect this $\zkz$-structure so that $((\tilde{M},\bar{P}),\tilde{\nu},\tilde{h})$ forms a $\zkz$-bordism between $((M,\emptyset),(V,\emptyset),h)$ and $((Q,P),\varepsilon,f)$.  Hence, $r(M,\phi^*(\nu),h)\sim((Q,P),\varepsilon,f)$ in $K_*(X;\zkz)$, completing the proof of the Bockstein exact sequence.
\end{proof}
The observant reader will note that this theorem is not quite sufficient to imply that $K_*(X;\zkz)$ is a realization of K-homology with coefficient in $\zkz$.  The issue is that it is not a priori clear that $K_*(X;\zkz)$ is a homology theory.  \par
A similar problem occurs in the development of the Baum-Douglas model for K-homology (see for example the discussion on p. 19 of \cite{BHS}).  To overcome this difficulty, Baum and Douglas construct a natural map from their geometric cycles to the analytic cycles of Kasparov and prove that it induces an isomorphism between the two theories (see \cite{BHS}).  Since analytic K-homology is known to be a homology theory the construction of this natural isomorphism implies that geometric K-homology is as well.
\par
In our setting, a similar construction is completed in \cite{Dee2}.  There, we construct a natural map from our geometric cycles to the analytic cycles of Schochet (see \cite{Sch}).  We then prove that (in the case of finite CW-complexes) this map induces an isomorphism between our geometric group and the analytic realization of K-homology with coefficients in $\zkz$ of Schochet.  Moreover, this isomorphism is natural with respect to the Bockstein sequences associated to these models.  In fact, the Bockstein sequence constructed in Theorem \ref{Bockstein} is a key part of the proof that the natural map between these theories is an isomorphism (see \cite{Dee2} for more details).  The end result of this construction is that $K_*(X;\zkz)$ (defined via our geometric cycles) does give a realization of K-homology with coefficients in $\zkz$.
\par
We now discuss the relationship between our construction, the Conner-Floyd map and ${\rm spin^c}$-bordism with coefficients.  Let $\Omega_{\rm{even/odd}}^{{\rm spin^c}}(X)$ denote the ${\rm spin^c}$-bordism group (similar remarks hold for complex bordism).  Then, at the level of cycles, the Conner-Floyd map (denoted by $\rm{CF}$) is given by:
$$(M,f)\in \Omega_{\rm{even/odd}}^{{\rm spin^c}}(X) \mapsto (M,M\times \field{C},f)\in K_*(X)$$
Sullivan's work on $\zkz$-manifolds implies that
$$\Omega_{\rm{even/odd}}^{{\rm spin^c}}(X;\zkz)= \{ ((Q,P),f) \}/\sim_{bor}$$
where $(Q,P)$ is a ${\rm spin^c}$ $\zkz$-manifold, $f:(Q,P)\rightarrow X$ is continuous, and $\sim_{bor}$ is the $\zkz$-bordism relation (see Definition \ref{borZkz}).  Based on this realization of ${\rm spin^c}$-bordism with coefficients in $\zkz$, there is a $\zkz$-version of the Conner-Floyd map, $\rm{CF}_{\zkz}$, which is explicitly defined at the level of cycles via
$$((Q,P),f)\in \Omega_{\rm{even/odd}}^{{\rm spin^c}}(X;\zkz) \mapsto ((Q,P),(Q\times \field{C},P\times \field{C}),f)\in K_*(X;\zkz)$$
Finally, the maps $CF$ and $CF_{\zkz}$ are natural with respect to the Bockstein sequences for the homology theories ${\rm spin^c}$-bordism and K-homology.  The proof of this fact follows from the explicit nature of the definitions of the maps involved.  For example, the commutativity of the diagram
\begin{center}
$\begin{CD}
\Omega^{{\rm spin^c}}_{\rm even}(X;\zkz) @>\rm{CF}_{\zkz}>> K_0(X;\zkz) \\
@V\delta_{bor} VV  @V\delta VV \\
\Omega^{{\rm spin^c}}_{\rm odd}(X) @>\rm{CF}>> K_1(X) \\ 
\end{CD}$
\end{center}
follows by direct calculation at the level of cycles (i.e., 
$$(\delta \circ \rm{CF}_{\zkz})([(Q,P),f])=[(P,P\times \field{C},f|_P)]=(\rm{CF} \circ \delta_{bor})([(Q,P),f])$$

\begin{ex}
In this example, we discuss $K_*(pt;\zkz)$.  The Bockstein sequence implies that the groups $K_0(pt;\zkz)$ and $K_1(pt;\zkz)$ are equal to $\zkz$ and $\{0\}$ respectively.  However, our goal here is to construct the natural map between the $\zkz$-cycles which generate $K_0(pt;\zkz)$ and $\zkz$.  \par
The isomorphism between $K_0(pt)$ and $\field{Z}$ is given by the map that takes even Baum-Douglas cycles to the topological index of the  Dirac operator associated to such a cycle.  The point is that the topological index is fundamental to the definition of the isomorphism between $K_0(pt)$ and $\field{Z}$.  Analogously, we will show here that the topological side of the Freed-Melrose index theorem is fundamental to the definition of the map between even $\zkz$-cycles over a point and $\zkz$. 
\begin{theorem}
Let $\Phi$ be the map on even dimensional $\zkz$-cycles over a point defined by $((Q,P),(E,F),f) \mapsto {\rm ind}^{top}_{\zkz}(D_{(E,F)})$, where ${\rm ind}^{top}_{\zkz}$ denotes the (topological) Freed-Melrose index and $D_{(E,F)}$ denotes the Dirac operator on $(Q,P)$ twisted by the $\zkz$-vector bundle $(E,F)$.  Then $\Phi$ descends to a group isomorphism between $K_0(pt;\zkz)$ and $\zkz$. \label{zkzPt}
\end{theorem}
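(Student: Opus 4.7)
The plan is to first verify that $\Phi$ is well-defined on equivalence classes of $\zkz$-cycles, next show it is an additive surjection, and finally use the Bockstein sequence of Theorem \ref{Bockstein} to deduce that it is an isomorphism.

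For well-definedness, I check compatibility with the three generators of the equivalence relation in Definition \ref{zkzModel}. The disjoint union/direct sum relation is immediate from the identity $D_{(E_1\oplus E_2,F_1\oplus F_2)}=D_{(E_1,F_1)}\oplus D_{(E_2,F_2)}$ and from additivity of the wrong-way maps $\pi^{\tilde{Q}}_{!}$ over disjoint embeddings into $(H^{2N}_k,\field{D}^{2N-1})$. Bordism invariance is precisely Theorem \ref{borInv}. For vector bundle modification by an even-dimensional ${\rm spin^c}$ $\zkz$-vector bundle $(W,V)$, I invoke Equation \ref{multZkzProp} applied to the sphere bundle projection $(Z_Q,Z_P)\to(Q,P)$; this gives $\pi^{\tilde{Z}_Q}_{!}=\pi^{\tilde{Q}}_{!}\circ\pi^{S}_{!}$. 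The $\zkz$-vector bundle $(\hat E\otimes\pi^*E,\hat F\otimes\pi^*F)$ is precisely the $\zkz$-version of the Bott/Thom class for $(W,V)$ paired with $(E,F)$, so $\pi^S_!$ returns $[(E,F)]\in K^0(\tilde Q)$, and the $\zkz$-indices agree.

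Additivity of $\Phi$ under disjoint union is automatic from the above, so $\Phi:K_0(pt;\zkz)\to\zkz$ is a group homomorphism. Surjectivity is clear from $\Phi([((pt,\emptyset),(\field{C},\emptyset),\mathrm{const})])=1\in\zkz$. For injectivity, I compare $\Phi$ with the classical Atiyah--Singer isomorphism ${\rm ind}^{top}:K_0(pt)\xrightarrow{\sim}\field{Z}$ via the square
\begin{center}
$\begin{CD}
K_0(pt) @>r>> K_0(pt;\zkz) \\
@V{{\rm ind}^{top}}VV  @VV{\Phi}V \\
\field{Z} @>{{\rm red}}>> \zkz
\end{CD}$
\end{center}
where ${\rm red}$ is reduction modulo $k$. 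Commutativity follows because, for a cycle $((M,\emptyset),(E,\emptyset),f)$, the embedding into $(H^{2N}_k,\field{D}^{2N-1})$ can be arranged to lie inside the smooth open set $H\subset H^{2N}_k$, which is diffeomorphic to $\field{R}^{2N}$. Thus $\pi^{\tilde{M}}_{!}$ factors through the classical topological index into $K^0(\field{R}^{2N})\cong\field{Z}$ followed by the natural map $K^0(\field{R}^{2N})\to K^0(\tilde H^{2N}_k)\cong\zkz$, which is reduction modulo $k$ by the same calculation underlying Equations \ref{MooreK0}--\ref{MooreK1}. The Bockstein sequence of Theorem \ref{Bockstein} together with $K_1(pt)=0$ forces $r$ to be surjective with kernel $k\cdot K_0(pt)$, so $r$ induces an isomorphism $\field{Z}/k\field{Z}\xrightarrow{\sim}K_0(pt;\zkz)$. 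A diagram chase then identifies $\Phi$ as the resulting isomorphism $K_0(pt;\zkz)\cong\zkz$.

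The main obstacle is the verification of invariance under vector bundle modification, which requires combining the multiplicativity of Equation \ref{multZkzProp} with a Thom-isomorphism/Bott-class computation on the even-dimensional ${\rm spin^c}$ sphere fiber, carried out compatibly across the $Q$- and $P$-pieces of the $\zkz$-structure. A secondary technical point is pinning down that $K^0(\field{R}^{2N})\to K^0(\tilde H^{2N}_k)$ really is reduction modulo $k$, which must be extracted from the standard Mayer--Vietoris calculation producing the isomorphism $K^0(\tilde H^{2N}_k)\cong\zkz$.
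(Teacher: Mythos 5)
Your proposal is correct and follows essentially the same route as the paper: well-definedness is checked against the three relations using bordism invariance (Theorem \ref{borInv}), the multiplicativity of the direct image map (Equation \ref{multZkzProp}) combined with the Thom isomorphism for vector bundle modification, and the isomorphism is then deduced from the Bockstein sequence at a point together with the commutativity of the square relating $r$, the classical index, and reduction mod $k$. The paper packages the last step as a Five Lemma argument rather than your explicit surjectivity-plus-diagram-chase, but the content is identical.
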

\begin{proof}
We begin by proving that $\Phi$ descends to a group homomorphism.  That is, we show that $\Phi$ respects the equivalence relations on $K_0(pt;\zkz)$.  For the disjoint union relation, consider two $\zkz$-cycles of the form $((Q,P),(E_1,F_1),f)$ and $((Q,P),(E_2,F_2),f)$.  Then, it is clear that 
\begin{equation*}
{\rm ind}_{\zkz}(D_{(E_1\dot{\cup}E_2,F_1\dot{\cup}F_2)})={\rm ind}_{\zkz}( D_{(E_1\oplus E_2,F_1\oplus F_1)})
\end{equation*}
where the first index is taken on the $\zkz$-manifold $(Q\dot{\cup}Q,P\dot{\cup}P)$ while the latter is over $(Q,P)$.   \par
Next, consider the bordism relation.  Using Theorem \ref{borInv} (i.e., the $\zkz$-index is a bordism invariant of $\zkz$-manifolds), we conclude that the map passes to bordism equivalence classes.  \par
Finally, we prove that the $\zkz$-index is invariant under $\zkz$-vector bundle modification.  To fix notation, let $((Q,P),(E,F),f)$ be a $\zkz$-cycle and $(W,V)$ a ${\rm spin^c}$ $\zkz$-vector bundle with even dimensional fibers.  Moreover, we note that the $\zkz$-vector bundle modification of $((Q,P),(E,F),f)$ by $(W,V)$ will be denoted by $((Q^W,P^V),(E^W,F^V),f\circ \pi)$ and $\pi:(Q^W,P^V) \rightarrow (Q,P)$ is a $\zkz$-fiber bundle and that the fibers of $\pi$ are even dimensional spheres.  Using the disjoint union operation, we need only consider the case when $Q$ is connected and hence that each fiber of $\pi$ is $S^{2n}$ for some fixed natural number $n$. Finally, we denote by $\pi^{S^{2n}}$ the direct image map from $K^0(\tilde{Q}^W)$ to $K^0(\tilde{Q})$ (see Equation \ref{multZkzProp} on p.g. 8). \par
Two facts about the direct image map are required:
\begin{enumerate}
\item $\pi^{\tilde{Q}^W}_{!}= \pi^{\tilde{Q}}_{!} \circ \pi^{S^{2n}}_{!}$
\item $\pi^{S^{2n}}_{!}([(E^W,F^V)]) = [(E,F)]$
\end{enumerate}
The first of these facts is a special case of Equation \ref{multZkzProp}, while the second follows from the relationship between the direct image map and the Thom isomorphism in K-theory.  In particular, $[(E^W,F^V)]$ is the image of $(E,F)$ under the Thom isomorphism. \par
Using these facts, we have that 
\begin{eqnarray*}
\Phi(((Q^W,P^V),(E^W,F^V),f\circ \pi)) & = & \pi^{\tilde{Q}^W}_{!}( [(E^W, F^V)] ) \\
& = & \pi^{\tilde{Q}}_{!}( \pi^{S^{2n}}_{!}( [(E^W,F^V)]) ) \\
& = & \pi^{\tilde{Q}}_{!}( [(E,F)]) \\
& = & \Phi(((Q,P),(E,F),f))
\end{eqnarray*}
This proves the invariance of the $\zkz$-topological index under $\zkz$-vector bundle modification.  Thus, $\Phi$ defines a map between $K_0(pt;\zkz)$ and $\zkz$. \par
That it is a group homomorphism follows from the fact that it respects disjoint union (i.e., the group operation) and also respects the operation of taking the opposite ${\rm spin^c}$ structure (i.e., the inverse operation in the group). \par
We now show, using the Bockstein exact sequence, that $\Phi$ is a group isomorphism.  The Bockstein sequence, in the special case of $X=pt$, has the form 
\begin{center}
$\begin{CD}
K_1(pt;\zkz) @>>> K_0(pt) @>>> K_0(pt) @>>> K_0(pt;\zkz) @>>> K_1(pt) \\
@VVV  @VVV @VVV @VVV @VVV \\
0 @>>>  \field{Z} @>>> \field{Z} @>>> \zkz @>>> 0  
\end{CD}$
\end{center}
where we have used the following three facts: 
\begin{enumerate}
\item $K_0(pt) \cong \field{Z}$ via the map which takes an even Baum-Douglas cycle to its index; 
\item $K_1(pt) \cong 0$
\item Multiplication by $k$ is injective in the group $K_0(pt)\cong\field{Z}$. 
\end{enumerate}
Moreover, the commutativity of the connecting maps between these exact sequences follows from the following facts:
\begin{enumerate}
\item The topological index respects the group operation, and hence respects multiplication by $k$ in $K_0(pt)$.
\item The topological Freed-Melrose index of a $\zkz$-cycle which is in the image of $r$ (i.e., is of the form $(M,\emptyset),(E,\emptyset),f)$) is the topological index of the Baum-Douglas cycle $(M,E,f)$ reduced ${\rm mod}\;k$.  
\end{enumerate}
The Five Lemma applied to these exact sequences leads to the fact that $\Phi$ is an isomorphism.
\end{proof}      
\end{ex}
\section{Direct Limits and $K_*(X;G)$}
We now use the geometric models for $K_*(X;\field{Z})$ and $K_*(X;\zkz)$ and inductive limits to construct geometric models for $K$-homology with coefficients in any abelian group.  The general idea is similar to that of Chapter 13 of \cite{Ror};  we leave the details to the interested reader. 
\par
To aid such a reader, we note that for $K(X;\field{Z}^k)$, we use $k$-tuples of Baum-Douglas cycles and for each $\field{Z}/n_s\field{Z}$ ($s=1,\ldots, r$), we use $\field{Z}/n_s\field{Z}$-cycles.  For inductive limits of groups of the form required, we need to consider group homomorphisms between $\field{Z}/n\field{Z}$ and $\zkz$.  Therefore, we need to construct maps which take $\field{Z}/n\field{Z}$-manifolds to $\zkz$-manifolds.
\par
We follow the construction in \cite{MS} to define the required maps.  For any $\zkz$-cycle, $((Q,P),(E,F),f)$, and $l\in \field{N}$, we define $r_l((Q,P),(E,F),f)$ to be the $\field{Z}/(l\cdot k)\field{Z}$-cycle $((l\cdot Q,P),(E,F),f)$.  Now for any $\field{Z}/(l\cdot k)\field{Z}$-cycle, $((Q,P),(E,F),f)$, we define $R_l((Q,P),(E,F),f)$ to be the $\zkz$-cycle defined by $((Q,l\cdot P),(E,F),f)$.  (The observant reader will note that the second of these maps is, in fact, only defined up to $\zkz$-bordism.)  Thus, if $G$ is an abelian group which is an inductive limit of copies of $\zkz$ and $\field{Z}$, then cycles for $K_*(X;G)$ can be constructed using $\zkz$-cycles and $\field{Z}$-cycles with their standard relations and an additional relation coming from the maps in the inductive limit.     
\par
The process of constructing geometric cycles using inductive limits may seem a priori artificial.  However, for specific coefficients group, we can reformulate such models at the level of cycles.  The case of $\field{Q}/\field{Z}$ is prototypical.  In this case, the cycles we have constructed for $K_*(X;\field{Q}/\field{Z})$ can be reformulated as follows.  
\begin{define} \label{qzCycle}
Let $X$ be a finite CW-complex.  Then a $\field{Q}/\field{Z}$-cycle over $X$ is a triple, $((Q,P,\pi),(E,F,\theta),(f_Q,f_P))$, where 
\begin{enumerate}
\item $Q$ and $P$ are compact ${\rm spin^c}$-manifolds (the former with boundary) and $\pi$ is trivial covering map $\partial Q \rightarrow P$.
\item $E$ and $F$ are vector bundles over $Q$ and $P$ respectively and $\theta$ is a lift of $\pi$, which is an isomorphism between $E|_{\partial Q}$ and $\pi^*(F)$.
\item $f_Q: Q \rightarrow X$ and $f_P:P \rightarrow X$ are continuous and $f_P \circ \pi = f_Q|_{\partial Q}$.
\end{enumerate}
\end{define}
Objects of this form (i.e., manifolds whose boundary is a finite trivial cover) have appeared in index theory before.  Most notably, a $\field{Q}/\field{Z}$-valued index is constructed for such objects in \cite{APS2}.  More generally, an $\field{R}/\field{Z}$-invariant (the $\rho$-invariant) is constructed and an index theorem for flat vector bundles is proved in \cite{APS3}.  This construction is done in K-theory with coefficients in $\field{R}/\field{Z}$.  As such, we can view Definition \ref{qzCycle} as a starting point for the construction of a model for K-homology with coefficients in $\field{R}/\field{Z}$.  Such a model should have applications to the both the $\eta$ and $\rho$-invariant.  The desire for such a construction in geometric K-homology is stated in Remark 6.12 of \cite{HReta}.

\section{Outlook}
The reader who is is familiar with K-homology will know that there is also an analytic model using Fredholm modules as cycles (see Chapter 8 \cite{HR}).  Moreover, there is natural map (constructed by Baum and Douglas) from geometric K-homology to analytic K-homology (see \cite{BHS}).  For a finite CW-complex, this map was shown to be an isomorphism by Baum and Douglas (also see \cite{BHS}).  Thus, the reader may ask if there is a similar construction from our geometric model for $K_*(X;\zkz)$ to an analytic realization of K-homology with coefficients in $\zkz$.  The answer is affirmative.  The second paper in this sequence \cite{Dee2} will deal with the construction of a suitable analytic realization of K-homology with coefficients and the construction of a map analogous to the one constructed by Baum and Douglas.  This paper uses results in \cite{Ros1} and \cite{Sch} in a fundamental way. 
\vspace{0.5cm} \\ 
{\bf Acknowledgments} \\
I would like to thank my PhD supervisor, Heath Emerson, for useful discussions on the content and style of this document.  In addition, I thank Nigel Higson, Jerry Kaminker, John Phillips, Ian Putnam, and Thomas Schick for discussions.  I would also like to thank the reviewer for a number of useful suggestions.  This work was supported by NSERC through a PGS-Doctoral award.

\vspace{0.25cm}
Email address: rjdeeley@uni-math.gwdg.de \vspace{0.25cm} \\
{ \footnotesize MATHEMATISCHES INSTITUT, GEORG-AUGUST UNIVERSIT${\rm \ddot{A}}$T, BUNSENSTRASSE 3-5, 37073 G${\rm \ddot{O}}$TTINGEN, GERMANY}

\end{document}